\newcommand{\Gr}{\mathcal{G}\mathcal{R}}
\title{Cohomology of finite graded group varieties}
\author{Camil I. Aponte Rom\'an and Alberto Chiecchio}
\date{}
\dedicatory{In memory of M. Scott Osborne,\\who kindled our love for algebra and\\coined the expression \emph{Noether correspondence} for groups}
\newcommand{\Dred}{\overline{\Delta}}
\newcommand{\uExt}{\underline{\mathrm{Ext}}}
\newcommand{\LHS}{Lyndon-Hochschild-Serre}
\newcommand{\grUT}{\boldsymbol{\mathrm{UT}}}
\newcommand{\grGL}{\boldsymbol{\mathrm{GL}}}
\begin{document}
\maketitle
\tableofcontents

%----------HTHTHT---------------------
% turn off those nasty overfull and underfull hboxes
%\hbadness=10000
%\hfuzz=50pt

%\newpage
%\begingroup
%\let\clearpage\relax

\begin{abstract}{We prove that, if $A$ is a positively graded, graded commutative, local, finite Hopf algebra, its cohomology is finitely generated, thus unifying classical results of Wilkerson and Hopkins-Smith, and of Friedlander-Suslin. We do this by showing the existence of conormal elementary quotients.}
\end{abstract}

\section{Introduction}

The cohomology of a Hopf algebra $A$, denoted by $H^{*,*}(A,\kf)=\uExt^{*,*}(A,\kf)$, has a ring structure which has been extensively studied. One natural question is if such ring is finitely generated. Another natural question is, if $M$ is a Noetherian module over a ring of $A$-invariants $S$, is the module $H^{*,*}(A,M)$ Noetherian over the ring $H^{*,*}(A,S)$. Wilkerson in \cite{Wi}, and later Hopkins and Smith in \cite{HS}, answer both questions affirmatively when $A$ is finite, positively graded, graded commutative and \emph{connected}, i.e., when $A_0=\kf$. Friedlander and Suslin study the ungraded case in \cite{FS}, and show again that the first question and a slight variation of the second question have positive answer when $A=A_0$ is finite and local .

%In the direction towards the unification of these results in a single framework, the first author introduces in \cite{AR1} the notion of \emph{(affine) graded group varieties}, which are in correspondence with positively graded, graded commutative, of finite type, local Hopf algebras. If $A$ is such a Hopf algebra, then $A_0$ is necessarily a local ring. In the same work, the first author constructs the \emph{connectivization} $\kappa(A)$ of a positively graded, graded commutative Hopf algebra, which is shown to be an algebraically connected, positively graded, graded commutative Hopf algebra. Moreover, it sits in a short exact sequence of Hopf algebras
%$$
%\kf\rightarrow A_0\rightarrow A\rightarrow\kappa(A)\rightarrow\kf,
%$$
%and $A\cong A_0\otimes\kappa(A)$ as graded algebras. Therefore it seems expected that one should be able to unify the two classical results of Wilkerson and Friedlander-Suslin.

The main result of our paper is an unification of the two classical results of Wilkerson and Friedlander-Suslin.

\begin{thm}[Theorem \ref{thm:cohom1}] Let $A$ be positively graded, graded commutative, local, finite Hopf algebra over a field $\kf$. We have the following:
\begin{itemize}
\item[(H)] the cohomology ring $H^{*,*}(A,\kf)$ is a finitely generated $\kf$-algebra;
\item[(Q)] if $A\rightarrow B$ is a quotient of graded commutative Hopf algebras, $H^{*,*}(B,\kf)$ is a finite $H^{*,*}(A,\kf)$-module;
\item[(M)] if $R$ is a Noetherian ring on which $A$ acts trivially and $M$ is a finite $R$-module, $H^{*,*}(A,M)$ is a finite $H^{*,*}(A,R)$-module.
\end{itemize}
\end{thm}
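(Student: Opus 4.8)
The plan is to follow the classical Evens–Venkov/Friedlander–Suslin strategy, building a dévissage through a chain of sub-Hopf-algebras, but the crucial structural input — the one that makes the argument uniform across the connected case (Wilkerson, Hopkins–Smith) and the local case (Friedlander–Suslin) — is the existence of \emph{conormal elementary quotients}, announced in the abstract. So the first step is to invoke that structure theorem: given $A$ positively graded, graded commutative, local and finite, one produces a normal sub-Hopf-algebra $C\subseteq A$ such that the quotient $E=A/\!\!/C$ is \emph{elementary}, i.e., (graded) isomorphic to a tensor product of a polynomial-truncated algebra $\kf[x]/x^{p^n}$ and an exterior algebra, for which the cohomology is explicitly known and visibly finitely generated, and such that $C$ is again of the same type with strictly smaller dimension. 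This reduces everything to an inductive argument on $\dim_\kf A$.

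With the filtration in hand, I would run the \LHS{} spectral sequence for the extension $C \to A \to E$:
\[
E_2^{s,t} = H^{s,*}\bigl(E, H^{t,*}(C,M)\bigr) \Longrightarrow H^{s+t,*}(A,M).
\]
For (H) one takes $M=\kf$, $R=\kf$; by induction $H^{*,*}(C,\kf)$ is a finitely generated $\kf$-algebra, and since $E$ is elementary its cohomology with coefficients in any finite module over a Noetherian ring on which it acts is finite (this is the base case, handled by direct computation with the Koszul-type resolution for $\kf[x]/x^{p^n}\otimes\Lambda$). Hence $E_2$ is a Noetherian bigraded ring. The standard permanent-cycles argument — producing enough elements of $H^{*,*}(A,\kf)$ that survive to $E_\infty$, using that the differentials commute with the $E_2$-algebra structure and a Noetherian-induction/norm argument to kill the obstructions — then shows $E_\infty$, hence the associated graded of $H^{*,*}(A,\kf)$, is a finite module over a finitely generated subring, giving finite generation of $H^{*,*}(A,\kf)$ itself. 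The points (Q) and (M) are then folded into the same induction: for (M) one feeds the finite $R$-module $M$ into the spectral sequence as coefficients and uses that $H^{*,*}(C,M)$ is finite over $H^{*,*}(C,R)$ (inductive hypothesis) together with finiteness over the elementary $E$; for (Q), given a quotient $A\to B$, one compares the chosen filtration of $A$ with a compatible filtration of $B$ — the conormality of the elementary quotient is exactly what guarantees the quotient maps are well-behaved, $C\cap(\ker(A\to B))$ and the induced quotient of $E$ being again of the right form — and runs a map of spectral sequences, reducing finiteness of $H^{*,*}(B,\kf)$ over $H^{*,*}(A,\kf)$ to the elementary case and the inductive step.

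I expect the main obstacle to be the \emph{conormality} bookkeeping: ensuring that the sub-Hopf-algebra $C$ can be chosen simultaneously normal and with \emph{elementary} quotient while remaining inside the class of positively graded, graded commutative, local, finite Hopf algebras, so that the induction closes, and that this choice is functorial enough with respect to quotients $A\to B$ to make the map of \LHS{} spectral sequences for (Q) available. In the connected case this is the classical statement that a connected finite graded commutative Hopf algebra is built from monogenic pieces; in the local case the $A_0$ part (an ordinary finite-dimensional local Hopf algebra, i.e., the coordinate ring of a finite group scheme) must be handled by the Friedlander–Suslin input, and the interaction between the degree-$0$ and positive-degree parts is where the real work lies. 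A secondary technical point is the norm/transfer argument needed to show that the relevant cohomology classes are permanent cycles (or that a power of each obstruction vanishes); here I would adapt the Evens norm map to the graded-Hopf-algebra setting, or alternatively use the Hopkins–Smith nilpotence/periodicity argument, whichever extends more cleanly to coefficients in a general finite $R$-module.
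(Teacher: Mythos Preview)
Your overall strategy --- induction on $\dim_\kf A$ via a conormal elementary quotient and the \LHS{} spectral sequence --- is exactly the paper's. However, you have written the spectral sequence in the wrong orientation. For the Hopf-algebra extension $\kf \to C \to A \to E \to \kf$ (your $C$ is the paper's $J_E$), the LHS spectral sequence reads
\[
E_2^{s,t,*} \;=\; H^{s,*}\bigl(C,\, H^{t,*}(E, M)\bigr) \;\Longrightarrow\; H^{s+t,*}(A, M),
\]
with the normal sub-Hopf-algebra on the \emph{outside} and the quotient on the \emph{inside}; see Theorem~\ref{thm:LHS}. (Recall that a sub-Hopf-algebra of $\kf[G]$ represents the \emph{quotient} group, so this is the usual $H^*(Q,H^*(N,-))$ shape.) In your orientation the inner coefficient would be $H^{*,*}(C,\kf)$, and there is no natural $E$-comodule structure on that object arising from the extension, so the spectral sequence you wrote is not available.

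This is not just notational; it governs how the induction actually runs. With the correct orientation the inner term $H^{*,*}(E,\kf)$ is, up to nilpotents, a polynomial ring in \emph{one} variable (the paper's ``elementary'' means monogenic, $\kf[x]^{gr}/(x^p)$, not a tensor of several truncated and exterior pieces), and the paper exploits precisely this: a short lemma shows that sufficiently high $p$-th powers in $H^{*,*}(E,\kf)$ are $C$-invariant (the augmentation ideal of a finite gr-group variety is nilpotent), and \cite[Proposition~1.4.11]{Pal} shows that a high power of the polynomial generator is a permanent cycle. Taking $S = H^{*,*}(E,\kf)^{p^N}$ and $T = H^{*,*}(C,\kf)\otimes S$, the inductive hypotheses $(H)$ and $(M)$ for $C$ make $T$ a Noetherian ring of permanent cycles over which $E_2$ is finite, and then \cite[Lemma~3.1]{Wi} (for $(H)$) and \cite[Lemma~1.6]{FS} (for $(M)$ and $(Q)$) finish. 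No Evens norm or transfer is needed --- permanence comes from the elementary structure of $E$, which is exactly why the spectral sequence must be run with $E$ on the inside. For $(Q)$ the paper does not seek a compatible filtration of $B$: it simply notes that $E' = B\otimes_A E$ is either $\kf$ (so $B$ is already a quotient of $C$ and $(Q_{n/e})$ applies directly) or $E$ (so one compares the two LHS spectral sequences termwise, using $(Q_{n/e})$ on the outer factor).
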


When $A=A_0$ this is the result of Friedlander-Suslin, and when $A_0=\kf$ this is the result of Wilkerson and Hopkins-Smith.

\vspace{1ex}

To prove this result we proceed by induction on $\dim_{\kf}A$. One mayor technical obstacle is the following. The proof of the result of Wilkerson (and of Hopkins-Smith) is by induction and relies on the existence of cocentral elementary quotients (see section 2.1 for the definitions). This heavily depends on the fact that the Hopf algebras considered are connected, and it is no longer true in our setting, as example \ref{ex:nococentral} shows. We show that, when $A_0$ is local, we can however construct conormal elementary quotients.

\begin{prop}[Proposition \ref{prop:conormalmonogenicquotients}] Let $A$ be a finite, positively graded, graded commutative, local, Hopf algebra. Then $A$ has a conormal elementary quotient.
\end{prop}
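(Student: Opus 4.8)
The plan is to translate the statement into the language of finite group schemes and produce a one‑dimensional \emph{central} piece. Write $\bar A$ for the augmentation ideal; since $A_0$ is local with residue field $\kf$, $\bar A$ is the unique maximal homogeneous ideal and is nilpotent by finiteness, so we may assume $\bar A\neq 0$. Dually, $G:=\operatorname{Spec}A$ is a finite (super) group scheme, and what I want is a proper nontrivial \emph{normal} monogenic closed subgroup scheme $N\trianglelefteq G$: the surjection $A\to\kf[N]$ is then an elementary quotient, it is conormal because normality of $N$ forces the left and right $\kf[N]$‑coinvariants of $A$ to agree, and $\dim_\kf\kf[N]$, $\dim_\kf(A\otimes_{\kf[N]}\kf)$ are both $<\dim_\kf A$, which is what the induction in Theorem~\ref{thm:cohom1} needs. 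First I would dispose of the exterior contribution: if $\operatorname{char}\kf\neq 2$ and $A$ has a nonzero odd‑degree indecomposable, then — because $A_0$ local means $G$ has a single $\kf$‑point, so $A^*$ has no nontrivial grouplikes — there is a nonzero odd primitive $\xi\in A^*$ spanning an elementary exterior sub‑Hopf‑algebra, equivalently an odd elementary quotient of $A$; its conormality is handled by the same centrality argument as below. Thus I may assume $A$ is (super‑)commutative, hence an honest commutative Hopf algebra, and in particular $\operatorname{char}\kf=p>0$ (in characteristic $0$ a finite local commutative Hopf algebra is trivial).

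Next I would reduce to height one. Let $A^{[p]}\subseteq A$ be the sub‑$\kf$‑algebra generated by the $p$‑th powers; it is a sub‑Hopf‑algebra since $\Delta$ and the antipode commute with raising to the $p$‑th power, it is concentrated in even degrees (the $p$‑th power of an odd‑degree element vanishes when $p$ is odd, and $p=2$ is already commutative), hence central in $A$, hence a normal sub‑Hopf‑algebra; so $A\to A/\overline{A^{[p]}}A=:A_{(1)}$ is a conormal quotient. By nilpotence $A_{(1)}\neq\kf$, and it is again finite, positively graded, graded commutative and local, with every augmentation‑ideal element now of vanishing $p$‑th power. Since the preimage of a normal sub‑Hopf‑algebra under a Hopf surjection is normal and a composite of conormal quotients is conormal, a conormal elementary quotient of $A_{(1)}$ pulls back to one of $A$, so I may assume $\bar A^{[p]}=0$. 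Then $G$ is a height‑$\le 1$ finite group scheme and corresponds to a nonzero finite‑dimensional restricted Lie algebra $\mathfrak g=\operatorname{Lie}G$; normal monogenic subgroup schemes of $G$ correspond to one‑dimensional $p$‑ideals of $\mathfrak g$, the resulting subgroup being $\alpha_p$ or $\mu_p$. To produce one, one passes to the center: once $\mathfrak z(\mathfrak g)\neq 0$, a line through a $p$‑nilpotent central vector is a one‑dimensional $p$‑ideal with trivial $[p]$‑operation, giving a central $\alpha_p\trianglelefteq G$, and if no such vector exists one gets a central $\mu_p$ instead (over a non‑perfect field one first extends scalars, harmlessly for the cohomological application).

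The step I expect to be the real obstacle is the one I glossed over: guaranteeing that the relevant group scheme has nontrivial center — equivalently, that $A$ local (together with the positivity of the grading) forces enough nilpotence on $G$ that the above has something to grab. In Wilkerson's connected case the Borel tensor decomposition $A\cong\bigotimes_iB_i$ simply hands over a monogenic tensor factor, which is automatically cocentral and hence conormal; with $A_0$ allowed to be a nontrivial local ring that decomposition disappears and cocentrality genuinely fails (Example~\ref{ex:nococentral}), so the argument must locate a piece sitting inside the center of $G$, and it is precisely here that the hypothesis ``$A_0$ local'' must be used in an essential way. I would therefore spend the bulk of the proof establishing this structural input — presumably by analyzing the coradical of $A$, or equivalently the unipotence of $G$, and combining it with the splitting $G\cong G^{+}\rtimes\operatorname{Spec}A_0$ coming from the grading — and only then invoke the restricted‑Lie‑algebra and centrality bookkeeping above.
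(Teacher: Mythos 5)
Your plan breaks down at exactly the point you flag as ``the real obstacle,'' but the problem is not that nontriviality of the center is hard to prove --- it is that the center is genuinely trivial in general, so no analysis of the coradical or of the splitting $A\cong A_0\otimes\kappa(A)$ can rescue a strategy whose output is a \emph{central} monogenic subgroup. A central elementary subgroup scheme of $G$ is the same thing as a cocentral elementary quotient of $A$, and Example~\ref{ex:nococentral} (which you cite) is precisely a finite gr-group variety with none: there $G\cong\alpha_p\rtimes\mu_p$ with $\mu_p$ acting faithfully by scaling, the scheme-theoretic center is trivial, the restricted Lie algebra is the non-abelian two-dimensional algebra with $\mathfrak{z}(\mathfrak{g})=0$, and $A$ is already of height one --- so the odd-degree step, the height-one reduction, and the final centrality step of your argument all return nothing on this example. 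What one must produce is a one-dimensional $p$-\emph{ideal} through a $p$-nilpotent vector, i.e.\ a common eigenline for the adjoint action that is typically not central; that is the entire content of the proposition and cannot be obtained ``by the same centrality argument.'' Two of your reductions are also unsound as stated. Conormality does not compose, because normality of closed subgroup schemes is not transitive: for the height-one Heisenberg group ($[x,y]=z$, trivial $p$-operation) the line $\kf x$ is a $p$-ideal of the abelian $p$-ideal $\langle x,z\rangle$ but not of $\mathfrak{g}$, so a conormal elementary quotient of $\kf[G_{(1)}]$ need not pull back to a conormal quotient of $\kf[G]$; for your specific chain you would need the chosen line to be stable under conjugation by all of $G$, not merely normal in $G_{(1)}$. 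Finally, the assertion that a primitive of $A^*$ spans a normal sub-Hopf-algebra is exactly what Examples~\ref{ex:oneyes,oneno} and~\ref{ex:linearcombination} refute; nothing in your odd-degree step forces normality either.

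The paper's proof sidesteps all of this by never searching for a distinguished subgroup inside $G$ itself. It embeds $G$ as a closed gr-subgroup of a Frobenius kernel $(\grUT_I)_{(r)}$ of a graded unitriangular group scheme, constructs there an explicit chain $\{1\}=U_0<U_1<\dots<U_{s+1}=(\grUT_I)_{(r)}$ in which every term is normal in the \emph{ambient} group and every successive quotient is elementary (Lemma~\ref{lm:Ui}, peeling off one matrix entry, one Frobenius power at a time), and then intersects the chain with $G$: each $G\cap U_i$ is automatically normal in $G$, each quotient $(G\cap U_i)/(G\cap U_{i-1})$ injects into the elementary $U_i/U_{i-1}$ and is therefore trivial or elementary, and the first nontrivial term is the desired normal elementary subgroup. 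This is the non-connected analogue of \cite[Lemma A.15]{HS}, and it is the mechanism your proposal is missing: normality in $G$ is inherited from normality in a well-understood overgroup rather than deduced from centrality in $G$.
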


This result is what allow us to do induction (since these quotients are conormal and not cocentral, the induction is necessarily more involved). 

We point out that, since an (affine) infinitesimal group scheme is represented by a finite, local, commutative Hopf algebra $A$, if we consider this Hopf algebra $A$ as trivially graded, the above proposition has the following immediate corollary.

\begin{cor} Let $G$ be an (affine) infinitesimal group scheme. Then $G$ has a closed normal elementary subgroup scheme.
\end{cor}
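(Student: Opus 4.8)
The plan is to read Proposition~\ref{prop:conormalmonogenicquotients} through the anti-equivalence between commutative Hopf algebras over $\kf$ and affine group schemes over $\kf$; beyond unwinding definitions there is essentially nothing to prove.

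First I would set $A=\kf[G]$, the coordinate ring of $G$. Because $G$ is an affine infinitesimal group scheme, $A$ is a finite-dimensional, local, commutative Hopf algebra over $\kf$, with maximal ideal the augmentation ideal $A^{+}=\ker(\epsilon)$. Regarding $A$ as graded and concentrated in degree $0$, it is trivially positively graded; it is graded commutative because the Koszul sign is trivial in degree $0$, so graded commutativity reduces there to ordinary commutativity; and it is visibly local and finite. Hence $A$ meets the hypotheses of Proposition~\ref{prop:conormalmonogenicquotients}, which yields a conormal elementary quotient of graded Hopf algebras $\pi\colon A\twoheadrightarrow C$. Since $A$ sits in degree $0$ so does $C$, and all of the grading data is vacuous, so $\pi$ is simply a surjection of commutative Hopf algebras.

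Next I would transport this across the anti-equivalence. The surjection $\pi$ corresponds to a closed immersion of group schemes $N:=\mathrm{Spec}(C)\hookrightarrow G$, i.e.\ $N$ is a closed subgroup scheme of $G$; as a closed subscheme of the infinitesimal scheme $G$ it is again infinitesimal. Unwinding the definition of a conormal quotient (section~2.1), the kernel $I=\ker\pi$ is a normal Hopf ideal — stable under the adjoint coaction of $A$ on itself — which is precisely the assertion that $N$ is stable under conjugation by $G$, that is, that $N$ is a normal subgroup scheme of $G$. Likewise an elementary quotient Hopf algebra is, by the definitions in section~2.1, one whose spectrum is an elementary group scheme, so $N$ is an elementary (necessarily infinitesimal) closed subgroup scheme of $G$, which is what we wanted.

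The one place where a little care is needed — the \emph{main obstacle}, such as it is — is the interface between the graded and ungraded settings: one must check that the graded notions ``conormal'' and ``elementary'' appearing in Proposition~\ref{prop:conormalmonogenicquotients}, specialised to a Hopf algebra concentrated in degree $0$, agree with the classical notions of a normal, resp.\ elementary, closed subgroup scheme. Since the Koszul signs and the internal degree shifts all collapse in degree $0$, this amounts to comparing the two sets of definitions rather than to any substantive argument.
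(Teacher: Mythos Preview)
Your proposal is correct and is exactly the approach the paper takes: the corollary is stated as an immediate consequence of Proposition~\ref{prop:conormalmonogenicquotients}, with the only observation being that an infinitesimal group scheme is represented by a finite, local, commutative Hopf algebra which one regards as trivially graded. Your write-up simply spells out in detail the dictionary between the graded and ungraded notions that the paper leaves implicit.
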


\subsection*{Acknowledgments} We would like to thank John H. Palmieri and Julia Pevtsova for the numerous useful discussions.

\section{Preliminaries}

Throughout the paper, we will fix a perfect field $\kf$ of characteristic $p>0$ (the main theorem is trivial if $\charfield\kf=0$). 

\vspace{1ex}

We will briefly recall some of the definitions and results that we will use in our paper.

\subsection{Graded group schemes and graded group varieties}

We start with some definitions and results of the first author in \cite{AR1}. The interested reader should look at \cite{AR1} and \cite{AR2} for a more exhaustive treatise.

\begin{df}[{\cite[Definition 2.1]{AR1}}] Let $\Gr$ be the category of (finitely generated) graded commutative $\kf$-algebras. A representable functor $G: \Gr \to (groups)$ is called an \emph{affine graded group scheme}, or \emph{gr-group schemes} for short. The graded algebra representing $G$ is denoted by $\kf[G]$ and is called the \emph{coordinate algebra} of $G$. 
\end{df}

We will drop the word affine from now on, as all our gr-schemes will be assumed to be affine.

Recall that a graded algebra is graded commutative if, for $a, b$ homogeneous elements in $A$, we have that $ab = (-1)^{|a||b|}ba$, where $|a|$ is the degree of $a$. As in the ungraded case, by Yoneda's Lemma there is an equivalence of categories between gr-group schemes and graded commutative Hopf algebras.

\begin{df}[{\cite[Definition 2.5]{AR1}}] We say that a gr-group scheme $G$ is a \emph{finite gr-group scheme} if $\kf[G]$ is finite dimensional. In that case we can define $\kf G$ as the graded dual of $\kf[G]$; $\kf G$ is a called the \emph{group algebra} for $G$. 
\end{df}

\begin{df}[{\cite[Definition 2.6]{AR1}}] We say that a gr-group scheme $G$ is a \emph{positive gr-group scheme} if $\kf[G]$ is positively graded; that is $\kf[G] = \bigoplus_{i \geq 0} (\kf[G])_i$. 
\end{df}

\begin{df}[{\cite[Definition 3.1]{AR1}}] Let $G$ be a gr-group scheme, and let $A = \kf[G]$. If $A_0$ is a local ring and $A$ is positively graded, of finite type (that is, each $A_i$ is finite dimensional) we say that $G$ is a \emph{graded group variety (gr-group variety)}. 
\end{df}

\begin{rk} Equivalently, in the above definition we can ask for $A$ to be positively graded, of finite type and graded local (i.e., it has a unique homogeneous maximal ideal). Indeed, if $A$ is positively graded, $A_+$ is a ideal, and therefore $A$ is graded local if and only if $A_0$ is local.
\end{rk}

To describe the Hopf algebra structure of coordinate rings of gr-group varieties it is enough to provide the comultiplication and the counit, by the following theorem.

\begin{thm}[{\cite[Theorem 3.3]{AR1}}] Let $A$ be a positively graded, graded local bialgebra of finite type, there exists an antipode map $S$ making $A$ into a graded Hopf algebra, that is, $A$ is the coordinate ring of a gr-group variety.
\end{thm}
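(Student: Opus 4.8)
The plan is to produce the antipode directly, as the convolution inverse of $\mathrm{id}_A$ inside $\mathrm{Hom}_{\kf}(A,A)$, via a geometric-series formula. Two features of the hypotheses make this work: the positive grading forces the iterated (reduced) coproduct to spread degree, and $A_0$, being a finite-dimensional \emph{local} ring, has a nilpotent maximal ideal. Together these will show that the relevant operator is locally nilpotent for the convolution product, so the geometric series converges coefficient-wise.

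First I would record the structure of the degree-zero part. Since the counit $\varepsilon$ is a graded algebra map and $\kf$ lives in degree $0$, $\varepsilon$ annihilates $A_i$ for $i\geq 1$ and restricts to an algebra surjection $\varepsilon_0\colon A_0\to\kf$; as $A_0$ is local this forces $\ker\varepsilon_0=\mathfrak m_0$, the maximal ideal, which is nilpotent, say $\mathfrak m_0^{\,N}=0$. Because $\Delta$ is graded we have $\Delta(A_0)\subseteq A_0\otimes A_0$, and the usual counit bookkeeping gives $\Delta(\mathfrak m_0)\subseteq\mathfrak m_0\otimes A_0+A_0\otimes\mathfrak m_0$ and hence $\Delta(x)-x\otimes1-1\otimes x\in\mathfrak m_0\otimes\mathfrak m_0$ for $x\in\mathfrak m_0$.

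Now set $\pi:=\mathrm{id}_A-\eta\varepsilon\in\mathrm{Hom}_{\kf}(A,A)$, and recall that $\eta\varepsilon$ is the unit for the convolution product $\ast$. The crux is that $\pi$ is locally $\ast$-nilpotent: for homogeneous $x\in A_n$ one has $\pi^{\ast k}(x)=0$ as soon as $k\geq n+N$. Indeed $\pi^{\ast k}(x)=m^{(k-1)}\bigl(\pi^{\otimes k}\bigr)\Delta^{(k-1)}(x)$, and in a summand of $\Delta^{(k-1)}(x)$ of multidegree $(d_1,\dots,d_k)$ with $\sum d_i=n$ at most $n$ of the $d_i$ are positive; in each degree-zero slot $\pi$ acts as $a\mapsto a-\varepsilon_0(a)1$ and lands in $\mathfrak m_0$, so after multiplying out — the degree-zero factors being central in a graded-commutative algebra — the term lies in $\mathfrak m_0^{\,k-n}A$, which vanishes once $k-n\geq N$. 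Consequently $S:=\sum_{k\geq0}(-1)^k\pi^{\ast k}$ is a well-defined, degree-preserving $\kf$-linear endomorphism (a finite sum on each $A_n$), and the telescoping identity $(\eta\varepsilon+\pi)\ast\sum_{k\geq0}(-1)^k\pi^{\ast k}=\eta\varepsilon=\sum_{k\geq0}(-1)^k\pi^{\ast k}\ast(\eta\varepsilon+\pi)$ exhibits $S$ as a two-sided convolution inverse of $\mathrm{id}_A=\eta\varepsilon+\pi$, i.e.\ as an antipode. Since $S$ preserves degree, $A$ is a graded Hopf algebra, and assembling the hypotheses (positively graded, graded commutative, of finite type, graded local) it is the coordinate ring of a gr-group variety.

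I expect the only genuinely delicate point to be the bookkeeping in the local-nilpotence claim: one must check carefully that applying $\pi$ in the degree-zero tensor slots really does produce factors in $\mathfrak m_0$, that these factors commute past the positive-degree factors, and that at least $k-n$ of them occur — it is the positive grading that bounds the number of positive-degree slots by $n$, and the Artinian-local structure of $A_0$ that kills the accumulated power of $\mathfrak m_0$. (Equivalently, one could first build the antipode of the sub-bialgebra $A_0$ by the same geometric-series argument and then extend by induction on degree; the uniform treatment above just avoids separating the two cases, and it also makes transparent why the hypothesis that $A_0$ be \emph{local} — rather than merely finite-dimensional commutative — is essential.)
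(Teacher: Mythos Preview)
This theorem is not proved in the present paper; it is quoted from \cite{AR1} without argument. The only trace of the original proof here is the remark (following the definition of $\Dred$) that, for $A$ graded local, the counit axioms force $\Delta(a)=a\otimes1+1\otimes a+\cdots$, i.e.\ $\Dred(a)\in I_A\otimes I_A$ where $I_A=\ker\varepsilon$.

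Your argument is correct: it is the standard Takeuchi/Milnor--Moore construction of the antipode as the convolution inverse $S=\sum_{k\geq0}(-1)^k\pi^{\ast k}$ of $\mathrm{id}_A$, with the local-nilpotence step adapted from the connected case to the graded-local one. The two ingredients --- at most $n$ positive-degree tensor slots in $\Delta^{(k-1)}(x)$ for $|x|=n$, and each degree-zero slot landing in the nilpotent ideal $\mathfrak m_0$ after applying $\pi$ --- combine exactly as you describe, and centrality of $A_0$ in a graded-commutative algebra lets you collect the $\mathfrak m_0$-factors to get $\pi^{\ast k}(x)\in\mathfrak m_0^{\,k-n}A_n$. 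Given the hint above, this is almost certainly the same mechanism as in \cite{AR1}; there is nothing to correct.
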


Recall that, a graded $\kf$-algebra $A$ is \emph{algebraically connected} (or simply \emph{connected}) if $A_0=\kf$. To relate the general case to the algebraically connected case, in \cite{AR1}, the first author constructed the algebraic connectivization of a graded Hopf algebra.

\begin{df}[{\cite[Definition 3.5]{AR1}}] Let $A$ be a graded Hopf algebra. Let $\kappa(A):=A\otimes_{A_0}k$. We call $\kappa(A)$ the \emph{algebraic connectivization} of $A$.
\end{df}

\begin{thm}[{\cite[Theorem 3.6]{AR1}}] Let $A$ be a positively graded Hopf algebra. The algebraic connectivization of $A$, $\kappa(A)$, is an algebraically connected graded Hopf algebra.
\end{thm}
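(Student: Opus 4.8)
The plan is to realize $\kappa(A)$ as a graded quotient Hopf algebra of $A$ by an explicit homogeneous Hopf ideal, and then to observe that the quotient is connected. First I would record the elementary reductions. The field $\kf$ is made into an $A_0$-algebra through the restriction $\varepsilon_0:=\varepsilon|_{A_0}\colon A_0\to\kf$ of the counit of $A$ — this makes sense because $\kf$ lives in degree $0$ — and $\varepsilon_0$ is a surjective map of $\kf$-algebras, so its kernel $\mathfrak a$ is a maximal ideal of $A_0$ with residue field $\kf$. With this structure $\kappa(A)=A\otimes_{A_0}\kf=A\otimes_{A_0}(A_0/\mathfrak a)\cong A/J$, where $J:=A\cdot\mathfrak a$. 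Since $\mathfrak a$ sits in degree $0$, $J=\bigoplus_n A_n\mathfrak a$ is a homogeneous ideal, so $\kappa(A)$ is a positively graded $\kf$-algebra, graded commutative because $A$ is; and $\kappa(A)_0=A_0/\mathfrak a\cong\kf$, so $\kappa(A)$ is algebraically connected. Everything then reduces to checking that $J$ is stable under the comultiplication, the counit, and the antipode of $A$.

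Stability under $\varepsilon$ is immediate: $\varepsilon(xa)=\varepsilon(x)\varepsilon(a)=0$ for $x\in A$ and $a\in\mathfrak a$. For $\Delta$, observe that $J\otimes A+A\otimes J=\ker\!\bigl(A\otimes_\kf A\to A/J\otimes_\kf A/J\bigr)$ is an ideal of $A\otimes_\kf A$ and $\Delta$ is an algebra map, so it suffices to see $\Delta(a)\in J\otimes A+A\otimes J$ for $a\in\mathfrak a$. I would combine two facts: first, $\Delta$ preserves degree, so $\Delta(a)\in A_0\otimes_\kf A_0$; second, the standard counit identity that $\Delta(a)-a\otimes 1-1\otimes a+\varepsilon(a)\,1\otimes 1\in\ker\varepsilon\otimes_\kf\ker\varepsilon$ for every $a$. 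As $\varepsilon(a)=0$, these force $\Delta(a)-a\otimes 1-1\otimes a$ into $(\ker\varepsilon\otimes\ker\varepsilon)\cap(A_0\otimes A_0)=(\ker\varepsilon\cap A_0)\otimes(\ker\varepsilon\cap A_0)=\mathfrak a\otimes\mathfrak a$, using that for subspaces over a field the tensor product commutes with intersection. Hence $\Delta(a)\in\mathfrak a\otimes A+A\otimes\mathfrak a\subseteq J\otimes A+A\otimes J$, as needed.

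For the antipode $S$ I would invoke that the antipode of a non-negatively graded Hopf algebra is homogeneous of degree $0$ — this is the one input that is not purely formal, though it too is elementary: the endomorphism of $A$ sending $a\in A_n$ to $\pi_n(S(a))$, with $\pi_n$ the projection onto $A_n$, is again a two-sided convolution inverse of $\mathrm{id}_A$, hence equals $S$ by uniqueness, so $S$ is homogeneous; alternatively one reads this off the construction of the antipode in \cite[Theorem~3.3]{AR1}. Granting it, $S(A_0)\subseteq A_0$, and since $\varepsilon\circ S=\varepsilon$ we get $S(\mathfrak a)\subseteq\ker\varepsilon\cap A_0=\mathfrak a$, so $S(J)=S(\mathfrak a\cdot A)\subseteq S(A)\cdot S(\mathfrak a)\subseteq A\cdot\mathfrak a=J$, the graded commutativity of $A$ making the anti-automorphism harmless. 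Thus $J$ is a Hopf ideal, $\kappa(A)=A/J$ is a graded Hopf algebra, and it is algebraically connected by the first paragraph, which proves the statement. In short, apart from the homogeneity of the antipode the whole argument is bookkeeping; the single content-bearing choice is to regard $\kf$ as an $A_0$-algebra via the counit, which is exactly what makes $\mathfrak a$ the augmentation ideal of $A_0$, and hence a Hopf ideal there.
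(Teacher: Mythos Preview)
The paper does not give a proof of this theorem; it is merely cited from \cite{AR1}, so there is nothing in the present paper to compare your argument against. Your proof is correct and self-contained: identifying $\kappa(A)$ with $A/J$ for $J=A\cdot\mathfrak a$, $\mathfrak a=\ker(\varepsilon|_{A_0})$, and verifying that $J$ is a homogeneous Hopf ideal is exactly the right strategy, and each of the three checks (counit, comultiplication, antipode) is carried out properly. The only substantive input, the degree-preservation of $S$, you handle adequately. One small remark: since $\mathfrak a\subseteq A_0$ lies in degree $0$, every element of $\mathfrak a$ is genuinely central in $A$ (the Koszul sign is $(-1)^{0\cdot|x|}=1$), so $A\mathfrak a=\mathfrak a A$ holds on the nose and the antipode step needs no appeal to graded commutativity at all. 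What you have proved is in fact slightly more than the stated theorem: you have shown that $A_0$ is a \emph{normal} sub-Hopf algebra of $A$, which is precisely the content of the next cited result in the paper, \cite[Lemma 3.7]{AR1}.
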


Moreover, the first author shows that we can recover the algebra structure of the coordinate ring $A$ of gr-group varieties for $A_0$ and $\kappa(A)$.

\begin{thm}[{\cite[Theorem 3.11]{AR1}}] Let $A$ be the coordinate ring of a gr-group variety. Then $A\cong A_0\otimes_{\kf}\kappa(A)$ as graded algebras.
\end{thm}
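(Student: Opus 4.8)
The plan is to produce an explicit graded algebra map $\Psi\colon A\to A_0\otimes_{\kf}\kappa(A)$ out of the Hopf structure, and then to prove it is bijective by a pair of Nakayama arguments run degreewise. First I would record three structure maps. Since $A$ is positively graded, comultiplication preserves degree, so $\Delta(A_0)\subseteq A_0\otimes A_0$ and the inclusion $\iota\colon A_0\hookrightarrow A$ of the degree-zero part is a sub-Hopf-algebra. The projection $p_0\colon A\to A_0$ onto the degree-zero summand is an algebra map splitting $\iota$: a product of positive-degree elements has positive degree, so $p_0(xy)=p_0(x)p_0(y)$. Because $A_0$ is local with augmentation $\epsilon_0\colon A_0\to\kf$, its augmentation ideal $A_0^+=\ker\epsilon_0$ is its unique maximal ideal and its residue field is $\kf$; hence $\kappa(A)=A\otimes_{A_0}\kf=A/A_0^+A$, and the quotient $q\colon A\to\kappa(A)$ is a graded algebra map. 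I then set $\Psi=(p_0\otimes q)\circ\Delta$. As a composite of graded algebra maps it is a graded algebra map, and a degree count on $\Delta(A_n)\subseteq\bigoplus_{i+j=n}A_i\otimes A_j$ shows that only the $A_0\otimes A_n$ summand survives $p_0\otimes q$, so $\Psi(A_n)\subseteq A_0\otimes\kappa(A)_n$; thus $\Psi$ lands in the tensor product algebra $A_0\otimes_{\kf}\kappa(A)$ and respects the grading.

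Next I would extract two compatibilities from the counit axioms. Applying $\Psi$ to $a_0\in A_0$ and using $(\mathrm{id}\otimes\epsilon)\Delta=\mathrm{id}$, together with $q|_{A_0}=\epsilon_0$, gives $\Psi(\iota(a_0))=a_0\otimes 1$. Dually, since $\epsilon_0\circ p_0=\epsilon$, applying $\epsilon_0\otimes\mathrm{id}$ and using $(\epsilon\otimes\mathrm{id})\Delta=\mathrm{id}$ gives $(\epsilon_0\otimes\mathrm{id})\circ\Psi=q$. The first identity, combined with multiplicativity, shows $\Psi$ is $A_0$-linear for the $A_0$-action on the target through its left factor. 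The second shows that, after base change $-\otimes_{A_0}\kf$ (reduction modulo $A_0^+$), $\Psi$ induces the identity $\bar\Psi=\mathrm{id}_{\kappa(A)}$ on $A\otimes_{A_0}\kf=\kappa(A)$.

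It remains to show $\Psi$ is bijective; the grading lets me apply Nakayama degreewise, since $A_0$ is Artinian local and each graded piece is finite dimensional over $\kf$. For surjectivity, the cokernel $C$ is a graded $A_0$-module with $C\otimes_{A_0}\kf=\mathrm{coker}(\bar\Psi)=0$, so $C_n=A_0^+C_n$ for every $n$, whence $C_n=0$ by Nakayama and $\Psi$ is onto. For injectivity I would use the short exact sequence $0\to K\to A\xrightarrow{\Psi}A_0\otimes_{\kf}\kappa(A)\to 0$ with $K=\ker\Psi$. The target is $A_0$-free, being of the form $A_0\otimes_\kf(\text{a graded }\kf\text{-vector space})$, so $\mathrm{Tor}_1^{A_0}(A_0\otimes_{\kf}\kappa(A),\kf)=0$; tensoring the sequence with $\kf$ over $A_0$ then yields an injection $K\otimes_{A_0}\kf\hookrightarrow A\otimes_{A_0}\kf=\kappa(A)$ whose image is $\ker\bar\Psi=0$, so $K\otimes_{A_0}\kf=0$. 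Degreewise Nakayama gives $K=0$, and $\Psi$ is the desired graded algebra isomorphism.

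The only step with real content is injectivity: surjectivity is a direct Nakayama argument, but injectivity is not a formal consequence of $\bar\Psi$ being an isomorphism, the obstruction being a $\mathrm{Tor}_1^{A_0}$ term. I expect the crux to be the observation that the codomain $A_0\otimes_{\kf}\kappa(A)$ is automatically $A_0$-free, which kills that $\mathrm{Tor}$ and lets Nakayama finish. I also note that the argument uses only the bialgebra structure, the non-negativity of the grading, finite type, and the locality of $A_0$; the antipode is never needed.
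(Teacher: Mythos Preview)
The paper does not actually prove this statement: it is quoted in the preliminaries from \cite[Theorem 3.11]{AR1} and used as a black box, so there is no in-paper proof to compare against. That said, your argument is correct and self-contained. The map $\Psi=(p_0\otimes q)\circ\Delta$ is a graded algebra homomorphism for the reasons you give, the two counit identities yield $\Psi|_{A_0}=\iota_{A_0}\otimes 1$ and $(\epsilon_0\otimes\mathrm{id})\Psi=q$, and the degreewise Nakayama argument goes through because $A_0$ is finite dimensional over $\kf$ (hence Artinian local) and each $A_n$ is finite dimensional by the finite-type hypothesis. Your handling of injectivity via the vanishing of $\mathrm{Tor}_1^{A_0}(A_0\otimes_{\kf}\kappa(A),\kf)$ is exactly the point that needs care, and you treat it correctly. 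The observation that only the bialgebra structure, positive grading, finite type, and locality of $A_0$ are used is also accurate.
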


Finally, we will use the following notations.

\begin{df}[{\cite[Definition 2.3]{AR1}}] We denote $\kf[x_1, \ldots, x_n]^{gr}$ to be the \emph{graded polynomial ring} over $\kf$ in $n$-variable, where $x_ix_j = (-1)^{|x_i||x_j|}x_jx_i$. Note that if $\charfield(\kf) \neq 2$, then $x_i^2 = 0$ if $|x_i|$ is odd. 
\end{df}

\begin{df} An element $x$ in a Hopf algebra is called \emph{primitive} if $\Delta(x)=x\otimes 1+1\otimes x$. A Hopf algebra $E$ is \emph{elementary} if $E\cong\kf[x]^{gr}/(x^p)$, $\Delta(x)=x\otimes 1+1\otimes x$, $\e(x)=0$. A gr-group scheme represented by an elementary Hopf algebra is called \emph{elementary}.
\end{df}

\begin{rk} In the previous definition, we do not exclude the case when $|x|$ is odd and $\charfield\kf\neq2$. In that case, $E=\kf[x]^{gr}=\kf[x]/(x^2)=\kf[x]^{gr}/(x^p)$, since $x^2=0$.
\end{rk} 

\begin{rk} This is a slightly more restrictive notion of elementary than what is used in the literature. In general, elementary Hop algebras are of the form $\kf[x]^{gr}/(x^{p^e})$ for $e\geq1$. For simplicity, we will only allow $e=1$. The advantage is that, with our notion, if $E$ is elementary, so is its dual $E^*$.
\end{rk}

\begin{df} If $A$ is a positively graded Hopf algebra, and $a\in A$, we use the notation $\Dred(a):=\Delta(a)-a\otimes 1-1\otimes a$. We will call $\Dred$ the \emph{reduced} comultiplication (or \emph{non primitive} comultiplication).
\end{df}

The reason for this notation is the following. If $A$ is graded local, the counit diagram implies that $\Delta(a)=a\otimes 1+1\otimes a+\ldots$ (see the proof of \cite[Theorem 3.3]{AR1}). Thus $\Dred(a)$ is the non-primitive part of the comultiplication. If fact, $a\in A$ is primitive if and only if $\Dred(a)=0$.

We will now briefly recall the notion of Frobenius kernel (see \cite[Section 3]{AR2} for the discussion in the graded case).
 
\begin{df} If $G$ is a gr-group scheme, the Frobenius morphism $F^r:G\rightarrow G$ is a gr-group scheme homomorphism (i.e., if it is a Hopf algebra morphism $\kf[G]\rightarrow\kf[G]$). Its kernel is a gr-group scheme, called the \emph{$r$th Frobenius kernel}, and is denoted by $G_{(r)}$. 
\end{df}

\begin{rk} If $I=(f_1,\ldots,f_n)$ is a finitely generated ideal in a ring, the ideal $I^{[p^r]}$ is the ideal $I^{[p^r]}=(f_1^{p^r},\ldots,f_n^{p^r})$, \cite[Definition 2.11]{ST}. It is not hard to see that the ideal $I^{[p^r]}$ is independent of the choice of generators for $I$, \cite[Exercise 2.12]{ST}. Let $G$ be a gr-group scheme represented by $\kf[G]$ and let $I_G$ be its the augmentation ideal; then $G_{(r)}$ is represented by $\kf[G_{(r)}]=\kf[G]/I_G^{[p^r]}$. Therefore, for any gr-group scheme $G$, the Frobenius kernel is a finite gr-group variety.
\end{rk}

\subsection{Short exact sequence of Hopf algebras and cohomology}

Recall the following definitions (see \cite[\S 1.4]{Pal} for more details).

\begin{df} Let $$\kf\rightarrow B\rightarrow A\rightarrow C\rightarrow\kf$$ be a short exact sequence of graded Hopf algebras. We say that $B$ is a \emph{normal} sub-Hopf algebra of $A$ and that $C$ is a \emph{conormal} quotient of $A$.
\end{df}

\begin{df} If $\kf[G]$ and $\kf[H]$ are coordinate rings of the graded group schemes $G$ and $H$, respectively, we say that $H$ is a \emph{closed graded subgroup scheme} of $G$ if there is a surjection of graded Hopf algebras $\kf[G]\rightarrow\kf[H]$. If furthermore $\kf[H]$ is a conormal quotient, we say that $H$ is \emph{normal} in $G$. In this case, there is a short exact sequence of graded Hopf algebras $\kf\rightarrow B\rightarrow\kf[G]\rightarrow\kf[H]\rightarrow\kf$: the gr-group scheme represented by $B$ will be denoted by $G/H$, i.e., $B=\kf[G/H]$.
\end{df}

\begin{lm}\label{lm:Yoneda} If $G$ and $H$ are graded group schemes, $H$ is a closed graded subgroup scheme of $G$ if and only if, for every graded commutative $\kf$-algebra $R$, $H(R)$ is a subgroup of $G(R)$. Moreover, in this case, $H$ is normal if and only if $H(R)$ is a normal subgroup of $G(R)$ for every $R$.
\end{lm}

\begin{proof} The first assertion is immediate form Yoneda's lemma. The second assertion is a consequence of Yoneda's lemma as well, once one notices that $\kf[G/H]$ represents the graded group scheme $G(R)/H(R)$.
\end{proof}

In \cite{AR1}, the first author proved the following lemma.

\begin{lm}[{\cite[Lemma 3.7]{AR1}}] Let $A$ be a positively graded Hopf algebra. There is a short exact sequence of graded Hopf algebras
$$
\kf\rightarrow A_0\rightarrow A\rightarrow\kappa(A)\rightarrow\kf
$$
\end{lm}

The proof is based on the observation that $A_0$ can be obtained as the cotensor product $A\hspace{-2ex}\qed_{\kappa(A)}\kf$. Let us recall that, if $f:A\rightarrow B$ is a map of Hopf algebras, the cotensor product is defined to be
$$
A\hspace{-2ex}\qed_B\kf=\{a\in A\,|\,(id\otimes f)(\Delta(a))-a\otimes1=0\},
$$
and similarly for $\kf\hspace{-2ex}\qed_B A$.

\begin{thm}[{\cite[Theorem 1.4.10]{Pal}}]\label{thm:LHS} Let $G$ be a gr-group variety with $A = \kf[G]$, and let $$k\rightarrow B\rightarrow A\rightarrow C\rightarrow k$$ be a short exact sequence of graded Hopf algebras. For any graded $A$-comodules $M_1, M_2, M_3$ there is a spectral sequence with 
$$E_2^{p,q,v} =  \uExt_{B}^{p,v}(M_1, \uExt_{C}^{q, *}(M_2, M_3)) \Rightarrow \uExt_A^{p+q,v}(M_1 \otimes M_2, M_3).$$
\end{thm}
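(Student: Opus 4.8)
The plan is to obtain the spectral sequence as the Grothendieck spectral sequence of a composite of left exact functors, exactly as in the classical construction of the Lyndon--Hochschild--Serre spectral sequence for a group extension; this is the route followed in \cite{Pal} for $A$ connected, and it will go through for gr-group varieties once the appropriate coflatness is in place. For a left $A$-comodule $M$ with coaction $\rho$, composing $\rho$ with $\mathrm{id}\otimes\pi$ (where $\pi\colon A\to C$) makes $M$ a $C$-comodule, and its $C$-coinvariants $M^{\mathrm{co}\,C}=\{m\in M:(\mathrm{id}\otimes\pi)\rho(m)=m\otimes 1\}$ inherit from $\rho$ a coaction $M^{\mathrm{co}\,C}\to M^{\mathrm{co}\,C}\otimes B$, since $B=A^{\mathrm{co}\,C}$ and $C$ is conormal. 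Thus $M\mapsto M^{\mathrm{co}\,C}$ is a functor $\mathrm{Comod}_A\to\mathrm{Comod}_B$, and an elementary computation with coinvariants shows that composing it with $N\mapsto N^{\mathrm{co}\,B}$ recovers $M\mapsto M^{\mathrm{co}\,A}=H^{0,*}(A,M)$. Deriving the two halves of this composite will give, for a single $A$-comodule $M$, a spectral sequence $H^{p,v}(B,H^{q,*}(C,M))\Rightarrow H^{p+q,v}(A,M)$.

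From there the remaining steps are: (1) deduce the three-comodule statement from this single-comodule one by the same internal-Hom and tensor manipulations used in the connected case (\cite{Pal}), using that $\mathrm{Comod}_A$ is symmetric monoidal because $A$ is a Hopf algebra, and noting that these manipulations equip $\uExt_C^{q,*}(M_2,M_3)$ with its natural $B$-comodule structure --- the relative-Hopf-module structure of the first paragraph applied to $\underline{\mathrm{Hom}}_\kf(M_2,M_3)$ with its diagonal $A$-coaction; and (2) check the two inputs that the Grothendieck spectral sequence for the composite $(-)^{\mathrm{co}\,B}\circ(-)^{\mathrm{co}\,C}$ requires, namely the composite identity above (formal), and the fact that $(-)^{\mathrm{co}\,C}$ sends injective $A$-comodules to injective $B$-comodules while restriction $\mathrm{Comod}_A\to\mathrm{Comod}_C$ sends them to injective $C$-comodules. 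For (2) it is enough to examine a cofree comodule $V\otimes A$: on one side $(V\otimes A)^{\mathrm{co}\,C}=V\otimes A^{\mathrm{co}\,C}=V\otimes B$ is cofree over $B$, hence injective; on the other side $V\otimes A$, restricted to a $C$-comodule, is again cofree --- \emph{provided} $A\cong B\otimes C$ as a $C$-comodule. The internal degree $v$ is carried through untouched since every functor in sight preserves it, so the spectral sequence is trigraded as stated.

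The main obstacle is precisely that structural input: one needs a short exact sequence of graded Hopf algebras $\kf\to B\to A\to C\to\kf$ to present $A$ as isomorphic to $B\otimes C$ as a $C$-comodule, equivalently as faithfully coflat over $C$, equivalently as faithfully flat over $B$. For $A$ connected this is classical and is the case treated in \cite{Pal}. When $A$ is only graded local I would obtain it either from the general structure theory of Hopf algebra extensions, or by bootstrapping from the connected case: the splitting $A\cong A_0\otimes_\kf\kappa(A)$ of \cite[Theorem 3.11]{AR1} lets one separate the problem into the connected factor $\kappa(A)$ and the finite local Hopf algebra $A_0$, where faithful flatness of extensions is again available. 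Graded commutativity causes no difficulty, as everything above takes place on the coalgebra side where the sign rule does not intervene; and the remaining points --- enough injectives in $\mathrm{Comod}_A$ for a graded Hopf algebra of finite type, functoriality of the relative-Hopf-module structures, and the bookkeeping of the degree $v$ and of the ``all internal degrees'' index $*$ --- are routine.
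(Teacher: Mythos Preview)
The paper does not supply its own proof of this theorem: it is stated in the preliminaries with a bare citation to \cite[Theorem 1.4.10]{Pal} and no proof environment follows. So there is nothing in the paper to compare your argument against beyond the reference itself.

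That said, your outline is the standard construction and is essentially what lies behind the cited result: realize $(-)^{\mathrm{co}\,A}$ as the composite $(-)^{\mathrm{co}\,B}\circ(-)^{\mathrm{co}\,C}$, verify the Grothendieck hypotheses by analyzing cofree comodules, and then upgrade to the three-comodule $\uExt$ statement via the internal Hom. The one place where you correctly flag extra work is the coflatness of $A$ over $C$ (equivalently, $A\cong B\otimes C$ as $C$-comodules) in the non-connected case. Your proposed reduction via $A\cong A_0\otimes_\kf\kappa(A)$ is reasonable but a bit delicate, since that splitting is only as graded \emph{algebras}, not as Hopf algebras; a cleaner route in the present setting is to note that $A$ is \emph{finite-dimensional}, so the freeness of $A$ over any sub-Hopf algebra $B$ follows from the Nichols--Zoeller theorem (or its graded analogue), which bypasses connectivity entirely. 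With that in hand the rest of your sketch goes through, and the trigrading is indeed carried along for free.
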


The above is called the \emph{\LHS{} spectral sequence}, or \emph{LHS} spectral sequence, for short.

If $G$ is a gr-group scheme with coordinate ring $A$, we define the cohomology $H^{*,*}(A,\kf):=\uExt_A(\kf,\kf)$. This is a bi-graded ring, see \cite[Appendix A]{AR2}. Moreover, for any $\kf$ vector space $M$, we consider the $H^{*,*}(A,\kf)$-module $H^{*,*}(A,M):=\uExt_A(\kf,M)$. 

%One crucial result that we will use is the following theorem of \cite{Wi}.
%
%
%\begin{lm}[{\cite[Lemma 3.1]{Wi}, \cite[Lemma 1.6]{FS}}]\label{WiL}
% If a first quadrant spectral sequence $\{E_r, d_r\}$ is an $R$ module for some noetherian ring $R$, and $E_2$ is a finitely generated $R$-module, then $E_\infty$, is a finitely generated $R$-module.
%\end{lm}

For completion we state the results from \cite{FS} and \cite{Wi} that we will unify. 

\begin{thm}[{\cite[Theorems 1.1 and 1.5]{FS}}]\label{FST}
Let $G$ be an infinitesimal group scheme over $\kf$. The cohomology $H^*(G,\kf)$ is finitely generated. Moreover, if $M$ is a finite $\kf$-vector space, $H^*(G,M)$ is a finite $H^*(G,\kf)$-module.
\end{thm}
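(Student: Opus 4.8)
Since this is the Friedlander--Suslin theorem, which we cite rather than reprove, I will sketch the original argument of \cite{FS}. The plan is to reduce the assertion for an arbitrary infinitesimal $G$ to the single universal family $\mathrm{GL}_{N,(r)}$, and then to establish finiteness there by a delicate spectral-sequence induction fed by explicit universal cohomology classes.

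First I would set up the reduction to a subgroup problem. An infinitesimal group scheme $G$ has height $\leq r$ for some $r$, i.e.\ $G=G_{(r)}$, and by the standard embedding theorem any such $G$ occurs as a closed (infinitesimal) subgroup scheme of $\mathrm{GL}_{N,(r)}$ for $N$ large. For a closed subgroup scheme $G\leq\Gamma$ of finite group schemes, induction $\mathrm{ind}_G^\Gamma(-)=(\kf[\Gamma]\otimes -)^G$ is exact (the quotient $\Gamma/G$ is affine) and generalized Shapiro's lemma gives $H^*(G,M)\cong H^*(\Gamma,\mathrm{ind}_G^\Gamma M)$, where $\mathrm{ind}_G^\Gamma M$ is finite-dimensional whenever $M$ is, since $\kf[\Gamma]$ is. Hence, once I know finite generation and the Noetherian module property for $\Gamma=\mathrm{GL}_{N,(r)}$ with arbitrary finite coefficients, I get them for $G$: taking $M=\kf$ shows $H^*(G,\kf)\cong H^*(\Gamma,\kf[\Gamma/G])$ is a Noetherian $H^*(\Gamma,\kf)$-module, hence module-finite over the finitely generated algebra $H^*(\Gamma,\kf)$ and therefore a finitely generated algebra; and since the $H^*(\Gamma,\kf)$-action on $H^*(G,M)$ factors through $H^*(G,\kf)$, Noetherianity over $H^*(\Gamma,\kf)$ passes to Noetherianity over $H^*(G,\kf)$. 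So everything reduces to the universal case.

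For $\Gamma=\mathrm{GL}_{N,(r)}$ I would argue by induction on the height $r$, along the extension of group schemes $1\to\mathrm{GL}_{N,(1)}\to\mathrm{GL}_{N,(r)}\to(\mathrm{GL}_N^{(1)})_{(r-1)}\to 1$, whose quotient is the $(r-1)$-st Frobenius kernel of a Frobenius twist. The base case $r=1$ is the cohomology of the restricted enveloping algebra $u(\mathrm{Lie}\,\mathrm{GL}_N)$, whose finite generation is the Friedlander--Parshall theorem. For the inductive step I would run the Lyndon--Hochschild--Serre spectral sequence $E_2=H^*(Q,H^*(\mathrm{GL}_{N,(1)},\kf))\Rightarrow H^*(\mathrm{GL}_{N,(r)},\kf)$ with $Q$ the quotient; by induction the $E_2$-page is a Noetherian module over a finitely generated algebra. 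The difficulty is that, unlike for a finite group, the cohomology here is unbounded and the spectral sequence can a priori support differentials in arbitrarily high degrees, so one cannot simply invoke the Evens--Venkov argument.

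This is the main obstacle, and its resolution is the technical heart of \cite{FS}: the construction of \emph{universal cohomology classes}. Working in the category $\mathcal{P}$ of strict polynomial functors, one computes the Ext-algebra $\mathrm{Ext}^*_{\mathcal{P}}(I^{(r)},I^{(r)})$ of the $r$-fold Frobenius twist functor $I^{(r)}$ and extracts a distinguished class $e_r$ in cohomological degree $2p^{r-1}$. Via the evaluation functor $F\mapsto F(\kf^N)$ and a stability comparison between $\mathrm{Ext}_{\mathcal{P}}$ and rational $\mathrm{GL}_N$-cohomology, $e_r$ produces genuine classes in $H^*(\mathrm{GL}_{N,(r)},\kf)$ (with twisted coefficients) that one proves are \emph{permanent cycles} and that map onto the polynomial generators of the $E_2$-page. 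Feeding these permanent cycles into a finiteness lemma for multiplicative spectral sequences (a Noetherian $E_2$ together with enough permanent cycles forces the higher differentials to vanish on a finitely generated subalgebra over which $E_\infty$ is module-finite) yields that $H^*(\mathrm{GL}_{N,(r)},\kf)$ is finitely generated and that $H^*(\mathrm{GL}_{N,(r)},M)$ is Noetherian over it for every finite module $M$. Combined with the reduction of the second paragraph, this proves both assertions of the theorem; the module statement follows by carrying finite coefficients through the same spectral-sequence machinery.
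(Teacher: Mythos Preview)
Your reading is correct: the paper does not prove this statement at all --- it is recorded purely as a citation of \cite[Theorems 1.1 and 1.5]{FS}, one of the two classical results the present paper aims to unify. So there is no ``paper's own proof'' to compare against; your decision to sketch the original Friedlander--Suslin argument is the appropriate response, and the sketch you give (embedding into $\mathrm{GL}_{N,(r)}$, Shapiro reduction, LHS induction on height, and the construction of universal permanent cycles via strict polynomial functors feeding into the spectral-sequence finiteness lemma) faithfully reflects the architecture of \cite{FS}.
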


\begin{thm}[{\cite[Theorem A]{Wi}, \cite[Theorem 4.13]{HS}}]\label{WiT}
If $A$ is a finite dimensional graded connected commutative Hopf algebra, then $H^{*,*}(A, \kf)$ is a finitely generated $\kf$-algebra. If $S$ is a graded Noetherian ring of $A$ invariants, and $M$ is a finite $S$-module, $H^{*,*}(A,M)$ is a finite $H^{*,*}(A,S)$-module. If $A\rightarrow B$ is a quotient of graded connected Hopf algebras, $H^{*,*}(B,\kf)$ is a finite $H^{*,*}(A,\kf)$-module.
\end{thm}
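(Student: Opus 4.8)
The plan is to use the structural result $A \cong A_0 \otimes_\kf \kappa(A)$ to reduce, roughly, to the algebraically connected case, but with a twist: the quotient $\kappa(A)$ will be built up from elementary pieces, while the local factor $A_0$ gets handled separately. More precisely, I would argue by induction on $\dim_\kf A$. If $A = \kf$ there is nothing to prove. Otherwise, I want to produce a proper conormal elementary quotient $A \to E$, so that there is a short exact sequence $\kf \to B \to A \to E \to \kf$ of graded commutative Hopf algebras with $\dim_\kf B < \dim_\kf A$ and $E$ elementary; then I would feed this into the \LHS{} spectral sequence (Theorem~\ref{thm:LHS}) and combine the inductive hypothesis on $B$ with the known cohomology of the elementary Hopf algebra $E$.

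First I would split into two cases according to whether $A$ is algebraically connected. If $A_0 \neq \kf$, then the short exact sequence $\kf \to A_0 \to A \to \kappa(A) \to \kf$ of the excerpt gives a conormal quotient $\kappa(A)$; but $\kappa(A)$ need not be elementary, so instead I would take a conormal \emph{elementary} quotient of $A$ directly from Proposition~\ref{prop:conormalmonogenicquotients}. That is really the whole point: that proposition is exactly what replaces the cocentrality argument of Wilkerson/Hopkins--Smith, which fails here (Example~\ref{ex:nococentral}). So in all cases, since $A$ is finite, positively graded, graded commutative and local, Proposition~\ref{prop:conormalmonogenicquotients} hands me a conormal elementary quotient $A \to E$, and $\kf \to B \to A \to E \to \kf$ is a short exact sequence of graded commutative Hopf algebras. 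Note $B$ is again finite, positively graded, graded commutative and local (normality/conormality and the cotensor-product description keep us in the category), so the inductive hypothesis applies to $B$ provided $\dim_\kf B < \dim_\kf A$, which holds because $\dim_\kf E = p > 1$ (or $=2$ when $|x|$ is odd) so the quotient is proper, and $\dim_\kf A = \dim_\kf B \cdot \dim_\kf E$.

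Next I would run the \LHS{} spectral sequence
$$
E_2^{p,q,v} = \uExt_B^{p,v}\bigl(\kf, \uExt_E^{q,*}(\kf,\kf)\bigr) \Rightarrow \uExt_A^{p+q,v}(\kf,\kf).
$$
The cohomology of an elementary Hopf algebra $E = \kf[x]^{gr}/(x^p)$ is classical and explicitly a finitely generated $\kf$-algebra (a polynomial algebra on one or two generators, depending on parity of $|x|$ and $p$), and it is finite as a module over its even part; in particular $H^{*,*}(E,\kf)$ is Noetherian. Using the inductive hypothesis applied to $B$ — part (H) for $H^{*,*}(B,\kf)$ and part (M) for $H^{*,*}(B, H^{*,*}(E,\kf))$, where $H^{*,*}(E,\kf)$ is viewed as a finite module over a Noetherian ring of $B$-invariants on which $B$ acts trivially — I get that $E_2$ is a Noetherian module over a Noetherian ring. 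Since this ring acts compatibly with the differentials (the spectral sequence is multiplicative), each page $E_r$ is a subquotient, hence Noetherian, so the spectral sequence degenerates at a finite page and $E_\infty$ is Noetherian; a standard filtration argument then gives that $H^{*,*}(A,\kf)$ is a finitely generated $\kf$-algebra, establishing (H). For (Q), given $A \to B'$ a quotient of graded commutative Hopf algebras, I would pull back the elementary quotient $A \to E$ and chase the \LHS{} spectral sequences for $A$ and $B'$ compatibly, again reducing to the inductive hypothesis for $B$; for (M), I would run the same spectral sequence with coefficients in $M$ and in $R$ and compare, using finiteness of $M$ over $R$.

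The main obstacle, and the reason the induction is "more involved" than in the connected case (as the authors warn), is that $E$ is only \emph{conormal}, not cocentral: the quotient Hopf algebra $E$ does not sit inside $A$ as a central sub-Hopf algebra, so $H^{*,*}(E,\kf)$ is a genuinely nontrivial $B$-module rather than a trivial one, and one must invoke the full strength of part (M) of the inductive hypothesis (with nontrivial Noetherian coefficient ring $R = H^{*,*}(E,\kf)^{\mathrm{ev}}$) rather than just (H). Making the module structures, gradings, and the three-variable bookkeeping $(p,q,v)$ in Theorem~\ref{thm:LHS} all line up — in particular checking that $H^{*,*}(E,\kf)$ really is a finite module over a \emph{Noetherian ring of $B$-invariants} in the precise sense needed to apply (M), and that everything is compatible with the multiplicative structure so that Noetherianity propagates through the pages — is where the real work lies; the degeneration and filtration arguments afterwards are standard.
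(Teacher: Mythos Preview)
The paper does not prove Theorem~\ref{WiT}; it is quoted from \cite{Wi} and \cite{HS} as one of the two classical results being unified, with no proof given. More to the point, your proposal is not a proof of Theorem~\ref{WiT}: that theorem assumes $A$ is \emph{connected}, i.e.\ $A_0=\kf$, so your case split ``according to whether $A$ is algebraically connected'', your appeal to Example~\ref{ex:nococentral}, and your use of Proposition~\ref{prop:conormalmonogenicquotients} as a replacement for cocentrality are all irrelevant --- in the connected case Wilkerson's primitive-of-top-degree construction (recalled in Section~3.1) already gives a \emph{cocentral} elementary quotient. What you have actually sketched is the paper's main Theorem~\ref{thm:cohom1}.

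Read that way, your outline is close to the paper's proof of Theorem~\ref{thm:cohom1}, but with one genuine gap. You say $H^{*,*}(E,\kf)$ is finite over ``a Noetherian ring of $B$-invariants on which $B$ acts trivially'' and that this ring ``acts compatibly with the differentials'', hence the spectral sequence ``degenerates at a finite page''. Neither claim is free. The paper uses Lemma~\ref{invact} to take $S=H^{*,*}(E,\kf)^{p^N}$ as the $J_E$-invariant subring, and separately invokes \cite[Proposition~1.4.11]{Pal} to guarantee that for $N\gg0$ the elements of $S$ are \emph{permanent cycles} in the LHS spectral sequence; only then does $T=H^{*,*}(J_E,\kf)\otimes S$ act on every page, and finiteness of $E_\infty$ follows from \cite[Lemma~3.1]{Wi} and \cite[Lemma~1.6]{FS}, not from any degeneration statement. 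Identifying a ring of invariants is not the same as identifying a ring of permanent cycles, and the paper needs both properties; your sketch conflates them.
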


Finally, by direct computation, \cite{L2} or \cite{Wi}, it is well known that Theorem \ref{thm:cohom1} holds when $E$ is elementary. Moreover, let us recall that the cohomology in that case is, up to nilpotents, a polynomial ring in one variable.

\begin{lm}[{\cite[Proposition 2.2]{Wi} and \cite[Proposition 6.10]{AR2}}] If $E=\kf[x]^{gr}/(x^p)$ the cohomology $H^{*,*}(E,\kf)$ is 
$$
H^{*,*}(E,\kf)\cong\begin{cases} \kf[y]^{gr},\,|y|=(1,|x|), & \charfield\kf=2,\\
\kf[y,\lambda]^{gr},\,|y|=(2,p|x|),\,|\lambda|=(1,|x|),&\charfield\kf>2,\,|x|\,even,\\
\kf[y]^{gr},\,|y|=(1,|x|),&\charfield\kf>2,\,|x|\,odd.
\end{cases}
$$
\end{lm}

\begin{rk} Thus, if $\charfield\kf=2$ or if $\charfield\kf>2$ and $|x|$ is odd, $H^{*,*}(E,\kf)\cong\kf[y]$; and if $\charfield\kf>2$ and $|x|$ is even, $H^{*,*}(E,\kf)\cong\kf[y,\lambda]/(\lambda^2)$.
\end{rk}

\section{Conormal elementary quotients}

\subsection{Wilkerson's construction}

The proof of the finite generation of the cohomology ring for finite  positively graded connected Hopf algebras, see \cite{Wi} and \cite{HS}, relies on the existence of elementary cocentral quotients in positive degree. This is however no longer true in our setting, as example \ref{ex:nococentral} shows.  In this section we recall Wilkerson's construction, and we discuss where the obstacle lay in our setting. We will construct conormal quotients in section 3.2.

\vspace{1ex}

Let $A$ be a finite positively graded Hopf algebras, and let $A^*$ be its dual. If $A^*$ is connected, there are primitive elements. Let $\chi\in A^*$ be any primitive of the highest degree (among primitives). The following properties are easy to check (see \cite{Wi} or \cite{L2}).

\begin{lm}\label{lm:primitives} Let $A^*$ be a graded Hopf algebra (not necessarily connected) and let $\chi\in A^*$ be primitive. Then $\chi^p$ is primitive and, for every $\alpha\in A^*$, $[\alpha,\chi]$ is primitive. 
\end{lm}

Back to the connected case, by maximality of the degree of $\chi$, $\chi^p=0$ and $[\alpha,\chi]=0$ for every $\alpha\in(A^*)_+$ Since $(A^*)_0=\kf$, then $\kf[\chi]^{gr}/(\chi^p)$ is a central elementary sub-Hopf algebra of $A^*$, which corresponds to a cocentral  elementary quotient $A\rightarrow E$.

\vspace{1ex}

If we follow the same approach of \cite{Wi} in our setting, the first obstacle is that, since our algebras are not connected, it is not clear that there will be primitive elements in the dual. Luckily, these elements exists.

\begin{lm} Let $A$ be a positively graded Hopf algebra. Then $A^*$ has primitive elements in positive degree.
\end{lm}

\begin{proof} Since $\kappa(A)$ is a quotient of $A$, we have an inclusion $\kappa(A)^*\subseteq A^*$, and $\kappa(A)^*$ is connected (since so is $\kappa(A)$). Since there are primitive elements in $\kappa(A)^*$, there are primitive elements in positive degree in $A^*$.
\end{proof}

Let us choose then $\chi$ of maximal degree among the primitive elements in $A^*$; since in general $(A^*)_0\neq\kf$, the element $\chi$ will not be central, as the next example shows.

\begin{ex}\label{ex:nococentral} In this example we give the coordinate ring of a finite gr-group variety with no cocentral elementary quotients. Let $A=\kf[m,x]^{gr}/(m^p,x^p)$, where $\charfield\kf=p$, $|m|=0$ and $|x|=2$, $\Dred(m)=m\otimes m$, $\Dred(x)=x\otimes m$, $\e(m)=\e(x)=0$. The quotient $A\rightarrow\kf[x]^{gr}/(x^p)$ is conormal, since $\kf[x]^{gr}/(x^p)=\kappa(A)$, but not cocentral. It is not hard to see that this is the only possible candidate for an elementary quotient, and thus that $A$ has no cocentral elementary quotients.
\end{ex}

In the above example, the quotient was conormal, so one somewhat na\"\i vely hopes that this procedure will produce conormal quotients. The problem is that we could have several candidates for $\chi$ and, as examples \ref{ex:oneyes,oneno} and \ref{ex:linearcombination} show, not all choices would work. In \ref{ex:oneyes,oneno}, one of the elements of top degree of a given basis  will not give a conormal quotient, while the other one will. In \ref{ex:linearcombination}, none of the elements of the basis given in the representation will give a conormal quotient, but a specific linear combination will. 

\begin{ex}\label{ex:oneyes,oneno} In this example we show the coordinate ring of a finite gr-group variety where some of the quotients in the highest degree (which guarantees that they are elementary) are conormal and some are not. Let $A=\kf[m,x,y]^{gr}/(m^p,x^p,y^p)$, $\charfield\kf=p$, $|m|=0$, $|x|=|y|=2$, $\Dred(m)=m\otimes m$, $\Dred(x)=\Dred(y)=x\otimes m$, $\e(m)=\e(x)=\e(y)=0$. Notice that $A\rightarrow\kf[x]^{gr}/(x^p)$ is not conormal, but $A\rightarrow\kf[y]^{gr}/(y^p)$ is.
\end{ex}

\begin{ex}\label{ex:linearcombination} This example is a variation of the previous one; it is given in characteristic $2$, but it is easily adaptable to any characteristic.

Let $A=\boldsymbol{\mathrm{F}}_2[m,x,y,z]^{gr}/(m^2,x^2,y^2,z^2)$, $|m|=0$, $|x|=|y|=|z|=2$, $\Dred(m)=m\otimes m$, $\Dred(x)=\Dred(y)=\Dred(z)=x\otimes m+y\otimes m+z\otimes m$, $\e(m)=\e(x)=\e(y)=\e(z)=0$. None of the quotients $A\rightarrow\boldsymbol{\mathrm{F}}_2[x]^{gr}/(x^2)$, $A\rightarrow\boldsymbol{\mathrm{F}}_2[y]^{gr}/(y^2)$, or $A\rightarrow\boldsymbol{\mathrm{F}}_2[z]^{gr}/(z^2)$ are conormal. However, $A\rightarrow\boldsymbol{\mathrm{F}}_2[x+y]^{gr}/((x+y)^2)$ is.
\end{ex}

For completion, we comment on another possible approach, which fails in the general case, but might work in some specific case. This approach is based on the observation that an evenly graded Hopf algebra is a Hopf algebra, and thus one might hope to only have to work with conormal quotient in odd degree and reduce to the evenly graded case. This might seem a particularly interesting approach since the comultiplication of the elements of odd degree is simpler (the square of these elements is zero). Unfortunately, the even degree part might not be a sub-Hopf algebra. For example, in \ref{ex:even-odd}, the only (conormal) quotient is in even degree, while in odd degree there is no quotient. As mentioned, this happens because the even part is not a sub-Hopf algebra. 

\begin{ex}\label{ex:even-odd} This is an example where the even part in not a sub-Hopf algebra. Let $A=\kf[x,y]^{gr}/(x^p)$,  $\charfield\kf=p>2$, $|x|=2$, $|y|=1$, $\Dred(y)=0$, $\Dred(x)=y\otimes y$, $\e(x)=\e(y)=0$. The even part of $A$ is $\kf[x]^{gr}/(x^p)$, which is not a sub-Hopf algebra, and, for the same reason, the \emph{algebra} quotient $A\rightarrow\kf[y]^{gr}$ is not a Hopf algebra quotient. However, there is conormal quotient (cocentral, in fact): $A\rightarrow \kf[x]^{gr}/(x^p)$.
\end{ex}

\begin{qt}\label{qt:conormalkappa} If $A$ is the coordinate ring of finite gr-group variety, does $A$ have an elementary conormal quotient in positive degree?
\end{qt}

Notice that such quotient would necessarily be a quotient of $\kappa(A)$ as well. A positive answer to the above question would provide an alternative proof of the finite generation of the cohomology, using the diagram below:

$$
\xymatrix{ & \kf \ar[d] & \kf \ar[d] & \kf \ar[d] & \\
\kf \ar[r] & A_0 \ar[r]^{=} \ar[d] & A_0 \ar[r] \ar[d] & \kf \ar[r] \ar[d] & \kf\\
\kf \ar[r] & J_E \ar[r] \ar[d] & A \ar[r] \ar[d] & E \ar[r] \ar[d]^{=} & \kf \\
\kf \ar[r] & \kappa(J_E) \ar[r] \ar[d] & \kappa(A) \ar[r] \ar[d] & E \ar[r] \ar[d] & \kf\\
 & \kf & \kf & \kf. & }
$$

It is our belief that a suitable variation of Wilkerson's construction should provide such quotients. In the next section, we will provide an alternative construction, which, however, does not guarantee that the quotients are in positive degree (i.e., they might be in degree $0$).

\subsection{Groups of unitriangular type}

We have the following characterization of conormal quotients of Hopf algebras.

\begin{lm}\label{lm:conormal} Let $A$ be the dual of the coordinate ring of a finite gr-group variety over $\kf$, let $B\subseteq A$ be a graded Hopf sub-algebra and let $I_B$ be its augmentation ideal. Let $A^*$ and $B^*$ be the duals of $A$ and $B$, respectively, and let $A^*\twoheadrightarrow B^*$ be the natural surjection dual of the inclusion $B\subseteq A$.
The following are equivalent:
\begin{enumerate}
\item $I_B\cdot A=A\cdot I_B$;
\item $I_B\cdot A$ is a graded Hopf ideal of $A$;
\item $A\cdot I_B$ is a graded Hopf ideal of $A$;
\item $I_B\cdot A$ is a (graded) Lie ideal of $A$;
\item $A\cdot I_B$ is a (graded) Lie ideal of $A$;
\item there is an exact sequence of graded Hopf algebras
\begin{equation}
\kf\rightarrow B\rightarrow A\rightarrow C_B\rightarrow\kf;
\end{equation}
\item there is an exact sequence of graded Hopf algebras
\begin{equation}
\kf\rightarrow J_{B^*}\rightarrow A^*\rightarrow B^*\rightarrow\kf;
\end{equation}
\item $A_*\hspace{-2ex}\qed_{B^*}\kf=\kf\hspace{-2ex}\qed_{B^*}A^*$.
\end{enumerate}
\end{lm}

\begin{proof} Notice that condition $(a)$ is equivalent to $I_B\cdot A$ being a bi-ideal, see \cite{MM}.

$(a)\Rightarrow(b)$ Since $I_B\cdot A=A\cdot I_B$, $C=A/(I_B\cdot A)$ is a cocommutive bialgebra, which is finite dimensional by the assumption on the dimensions. The dual $C^*$ is a positively graded, commutative bialgebra. Notice that $C^*\subseteq A^*$. In particular, $C^*$ is of finite type and $C^*_0\subseteq A^*_0\cong\kf[x_1,\ldots,x_r]/(x_1^{p^{e_1}},\ldots,x_r^{p^{e_r}})$, \cite[Theorem 14.4]{W}. Therefore $C^*_0$ is a local ring. By \cite[Proposition A.4]{AR1}, $C^*$ is gr-local. Thus, by \cite[Theorem 3.3]{AR1}, $C^*$ is a Hopf algebra. Since $C$ is finite dimensional, $C\cong (C^*)^*$ is also a Hopf algebra.

$(a)\Rightarrow (c)$ The proof is as above.

$(b),(c)\Rightarrow (a)$ Clear.

$(a)\Rightarrow(d)$ Let $a\in A$ and $z\in I_B\cdot A=A\cdot I_B$. Let $z=\sum b_ix_i=\sum y_jc_j$, with $b_i,c_j\in A$, $x_i,y_j\in I_B$; then
\begin{eqnarray*}
[a,z]&=&az-(-1)^{|a||z|}za=a\sum b_ix_i-(-1)^{|a||z|}(\sum y_jc_j)b=\\
&=&\sum ab_ix_i-(-1)^{|a||z|}\sum y_jc_jb.
\end{eqnarray*}

Notice that $\sum ab_ix_i\in A\cdot I_B=I_B\cdot A$, while clearly $\sum y_jc_jb\in I_B\cdot A$. Thus $[a,z]\in I_B\cdot A$.

$(a)\Rightarrow (e)$ As above.

$(d)\Rightarrow (a)$ Let $a\in A$ and $x\in I_B$. Since $1\in A$, $x=x\cdot 1\in I_B\cdot A$; then $[a,x]=ax-(-1)^{|a||x|}xa\in I_B\cdot A$. Therefore $ax=(-1)^{|a||x|}xa-[a,x]\in I_B\cdot A$.

$(e)\Rightarrow (a)$ As above.

$(b)\Rightarrow(f)$ Set $C_B=A/(I_B\cdot A)$.

$(f)\Rightarrow(a)$ It is enough to observe that $I_B\cdot A=\ker(A\rightarrow C_B)=A\cdot I_B$.

$(f)\Leftrightarrow(g)$ This is just duality.

$(g)\Leftrightarrow(h)$ This is as in \cite{MM} since the surjection $A^*\rightarrow B^*$ is split as map of $\kf$-vector spaces. We point out that, in this case, $J_{B^*}=A_*\hspace{-2ex}\qed_{B^*}\kf$.

\end{proof}

\begin{lm}[Noether correspondence for graded group schemes]\label{lm:Noether} Let $G$ and $H$ be graded group schemes, with $H$ a normal closed graded subgroup scheme of $G$. There is a natural bijection
$$
\left\{\begin{array}{c}\textrm{closed graded subgroup}\\\textrm{schemes of $G/H$}\end{array}\right\}\leftrightarrow\left\{\begin{array}{c}\textrm{closed graded subgroup}\\\textrm{schemes of $G$ containing $H$}\end{array}\right\}
$$
which preserves normality.
\end{lm}

\begin{proof} This is immediate from the functorial description, lemma \ref{lm:Yoneda}.
\end{proof}

\begin{rk} The previous lemma can be proven working directly with Hopf algebras. By hypothesis, there is a short exact sequence of graded Hopf algebras
$$
\kf\rightarrow\kf[G/H]\rightarrow\kf[G]\rightarrow\kf[H]\rightarrow\kf.
$$
Notice that $K$ is a closed graded subgroup scheme of $G$ containing $H$ if and only if the surjection $\kf[G]\rightarrow\kf[H]$ factors as $\kf[G]\rightarrow\kf[K]\rightarrow\kf[H]$. In that case, the group $K/H$ is represented by the preimage of $\kf[K]$ in $\kf[G/H]$. Conversely, if $K$ is a subgroup of $\kf[G/H]$, the corresponding subgroup of $G$ containing $H$ is represented by $\kf[G]\otimes_{\kf[G/H]}\kf[K]$.
\end{rk}

\begin{df} Let $I=(I_1,\ldots,I_n)$ be a collection of non-negative integers, with $I_j\leq I_{j+1}$. The \emph{graded unitriangular scheme $\grUT_I$} is the gr-group scheme with coordinate algebra $\kf[x_{ij}]^{gr}_{1\leq i<j\leq n}$, $|x_{ij}|=I_j-I_i$, $\e(x_{ij})=0$ and $\Dred(x_{ij})=\sum_{i<k<j}x_{ik}\otimes x_{kj}$.
\end{df}

As usual, if $I_j-I_i$ is odd and $\charfield\kf\neq2$, then $x_{ij}^2=0$.

\begin{rk} When $I=(0,\ldots,0)$, this is the usual unitriangular group scheme, i.e., the scheme of upper triangular matrices with $1$s on the main diagonal.
\end{rk}

\begin{rk} The gr-group scheme $\grUT_I$ is naturally a gr-subgroup scheme of the graded general linear group $\grGL_I$, as defined in \cite[Definition 5.1]{AR2}.  Moreover, it is positively graded.
\end{rk}
%
%\begin{df} Let $I=(I_1,\ldots,I_n)$ be a collection of non-negative integers with $I_j\leq I_{j+1}$; let $\Pscr\subseteq\{(i,j)\,|\,1\leq i<j\leq n\}$ and let $\Escr=\{e_{ij}\,|\,(i,j)\in\Pscr\}$. The \emph{graded unitriangular scheme of type $(\Pscr,\Escr)$ $\grUT_I^{\Pscr,\Escr}} is the gr-group scheme represented by $\kf[x^{p^{e_{ij}}}_{ij}]^{gr}_{(i,j)\in\Pscr, e_{ij}\in\Escr}/(x_{ij}^{p^r})$
%\end{df}

\begin{lm}\label{lm:Ui} For every $I$ and $r\geq1$, the gr-group variety $(\grUT_I)_{(r)}$ has a chain of closed graded subgroup varieties
$$
(\grUT_I)_{(r)}=U_{s+1}>U_s>\ldots>U_1>U_0=\{1\},
$$
such that each $U_i$ is normal in $(\grUT_I)_{(r)}$ and, for each $0\leq i\leq s$, $U_{i+1}/U_i$ is an elementary gr-group scheme.
\end{lm}

\begin{proof} We will construct this sequence inductively: by the Noether correspondence, Lemma \ref{lm:Noether}, it is enough to show that for each $0\leq i\leq s$, $(\grUT_I)_{(r)}/U_i$ has a normal elementary graded subgroup scheme.

Let us assume that we have constructed the first $i$ subgroups and that the coordinate ring of $U^i=(\grUT_I)_{(r)}/U_i$ is of the form
\begin{equation}\label{eq:k[U]}
\kf[U^i]=\frac{\kf[x_{ij},x^{p^e}_{ml}]^{gr}_{(i,j)\in\Pscr}}{(x_{ij}^{p^r},x_{ml}^{p^r})},\Pscr=\{(i,j)\,|\,1\leq i<j<l\}\cup\{(i,l)\,|\,m< i<l\},\;e<r.
\end{equation}

As usual, we are being redundant in the notation, since some of the variables might satisfy $x_{ij}^2=0$. This is essentially the graded unitriangular matrices, minus some variables which might have already disappeared, starting from the top right corner and proceeding top to bottom, right to left. In down to earth terms, the pair $(m,l)$ corresponds to the first existing row in the last existing column. The comultiplication is of the form
\begin{equation}\label{eq:DeltaK[U]}
\Dred(x_{ij})=\sum_{i<k<j}x_{ik}\otimes x_{kj},\quad \Dred(x_{ml}^{p^e})=\sum_{m<k<l}x_{mk}^{p^e}\otimes x_{kl}^{p^e}.
\end{equation}
Notice that this is certainly the case when $i=0$, as $U^0=(\grUT_I)_{(r)}$.

Let
$$
\alpha:\kf[U^i]\rightarrow\kf[E]=\kf[t]^{gr}/(t^{p^r}),\quad x_{ml}^{p^e}\mapsto t, x_{ij}\mapsto0,
$$
where $|t|=p^e(I_l-I_m)=|x_{ml}^{p^e}|$, $\Dred(t)=0$, $\e(t)=0$. Again, if $|x_{ml}|$ is odd, then $e=0$ and the map is to $\kf[E]=\kf[t]^{gr}=\kf[t]^{gr}/(t^2)$. Since the element $x_{ml}$ does not appear in the comultiplication of any other $x_{ij}$ in $\kf[U^i]$, the above map is a quotient of Hopf algebras. By Lemma \ref{lm:conormal}, $E$ is conormal in $U^i$ if and only
$$
\kf[U^i]\hspace{-2ex}\qed_{\kf[E]}\kf=\kf\hspace{-2ex}\qed_{\kf[E]}\kf[U^i].
$$
We notice that, again since $x_{ml}$ does not appear in the comultiplication of any other $x_{ij}$, we have $x_{ij}\in\kf[U^i]\hspace{-2ex}\qed_{\kf[E]}\kf$: indeed
$$
(id\otimes\alpha)(\Delta(x_{ij}))=(id\otimes\alpha)(x_{ij}\otimes 1+1\otimes x_{ij}+\sum x_{ik}\otimes x_{kj})= x_{ij}\otimes 1=(id\otimes 1)(x_{ij}).
$$
Similarly
$$
(id\otimes\alpha)(\Delta(x_{ml}^{p^{e+1}}))=x_{ml}^{p^{e+1}}\otimes 1=(id\otimes 1)(x_{ml}^{p^{e+1}}).
$$
Thus $\kf[U^i]\hspace{-2ex}\qed_{\kf[E]}\kf$ contains the subalgebra generated by $x_{ij}$ and $x_{ml}^{p^{e+1}}$. Since
$$
\dim_{\kf}\kf[U^i]\hspace{-2ex}\qed_{\kf[E]}\kf=\frac{\dim_{\kf}\kf[U^i]}{\dim_{\kf}\kf[E]}=\frac{\dim_{\kf}\kf[U^i]}{p}=\dim_{\kf}\frac{\kf[x_{ij},x_{ml}^{p^{e+1}}]^{gr}_{(i,j)\in\Pscr}}{(x_{ij}^{p^r},x_{ml}^{p^r})},
$$
we must have
$$
\kf[U^i]\hspace{-2ex}\qed_{\kf[E]}\kf=\frac{\kf[x_{ij},x_{ml}^{p^{e+1}}]^{gr}_{(i,j)\in\Pscr}}{(x_{ij}^{p^r},x_{ml}^{p^r})}.
$$
Similarly
$$
\kf\hspace{-2ex}\qed_{\kf[E]}\kf[U^i]=\frac{\kf[x_{ij},x_{ml}^{p^{e+1}}]^{gr}_{(i,j)\in\Pscr}}{(x_{ij}^{p^r},x_{ml}^{p^r})}.
$$
Of course, if $e+1=r$, the variable $x_{ml}$ will not be in the kernel anymore. If the degree of $x_{ml}$ is odd, the computation is the same, but, instead of dividing by $p$ in the dimension, we divide by $2$. Thus we have shown that there is a short exact sequence of graded Hopf algebras
$$
\kf\rightarrow\frac{\kf[x_{ij},x_{ml}^{p^{e+1}}]^{gr}_{(i,j)\in\Pscr}}{(x_{ij}^{p^r},x_{ml}^{p^r})}\rightarrow\kf[U^i]\rightarrow\kf[E]\rightarrow\kf,
$$
with $\kf[E]$ elementary, which also shows inductively the description in \eqref{eq:k[U]}, thus completing the proof.
\end{proof}

We say that a graded group scheme as in \eqref{eq:k[U]} and \eqref{eq:DeltaK[U]} is \emph{of unitriangular type}.

\begin{lm} Let $G$ be a finite gr-group variety. There exists a closed gr-subgroup embedding $G\hookrightarrow(\grUT_I)_{(r)}$ for some index $I$ and some positive integer $r$.
\end{lm}

\begin{proof} The proof is the same as \cite[Proposition 5.5]{AR2}.
\end{proof}

\begin{prop}\label{prop:conormalmonogenicquotients} Let $G$ be a finite gr-group variety. Then $G$ has a closed normal elementary graded subgroup scheme.
\end{prop}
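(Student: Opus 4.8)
The plan is to deduce this from the structure of $(\grUT_I)_{(r)}$ already established. First I would invoke the lemma just established to fix a closed gr-subgroup embedding $G\hookrightarrow W:=(\grUT_I)_{(r)}$ for suitable $I$ and $r$, and then Lemma \ref{lm:Ui} to obtain a chain $W=U_{s+1}>U_s>\cdots>U_1>U_0=\{1\}$ of closed gr-subgroup varieties, with each $U_i$ normal in $W$ and each successive quotient $U_{i+1}/U_i$ elementary. The guiding idea is that the first term of this chain meeting $G$ nontrivially is forced to be an elementary normal subgroup of $G$.

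To carry this out, I would set $G_i:=G\cap U_i$, i.e. the gr-subgroup scheme with $G_i(R)=G(R)\cap U_i(R)\subseteq W(R)$; this is again a closed gr-subgroup scheme (an intersection of closed gr-subgroup schemes), and by Lemma \ref{lm:Yoneda} it is normal in $G$, since $U_i(R)$ is normal in $W(R)$ while $G(R)$ is a subgroup of $W(R)$. We have $G_0=\{1\}$ and $G_{s+1}=G$. Assuming $G\neq\{1\}$ (if $G=\{1\}$ the assertion is empty, as the trivial group has no elementary subgroup scheme), let $j$ be the least index with $G_j\neq\{1\}$; then $1\le j\le s+1$ and $G_{j-1}=\{1\}$.

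Next I would examine the composite homomorphism $\varphi\colon G_j\hookrightarrow U_j\twoheadrightarrow U_j/U_{j-1}=:E$, where $U_{j-1}$ is normal in $U_j$ because it is normal in $W$, and $E$ is elementary by Lemma \ref{lm:Ui}. Using the functorial description of quotients (Lemma \ref{lm:Yoneda} and its proof), the kernel of $\varphi$ on $R$-points is $G(R)\cap U_j(R)\cap U_{j-1}(R)=G(R)\cap U_{j-1}(R)=G_{j-1}(R)=\{1\}$, so $\varphi$ is a monomorphism. At this point I would record the elementary observation that $E=\kf[t]^{gr}/(t^p)$ has exactly two closed gr-subgroup schemes: a Hopf ideal of $\kf[E]$ has the form $(t^k)$, and $\Delta(t^k)=\sum_i\binom{k}{i}t^i\otimes t^{k-i}$ lies in $(t^k)\otimes\kf[E]+\kf[E]\otimes(t^k)$ only when $k$ is a power of $p$, forcing $k\in\{1,p\}$ (and similarly $k\in\{1,2\}$ when $|t|$ is odd), which gives the quotients $\kf$ and $\kf[E]$, i.e. the subgroup schemes $\{1\}$ and $E$. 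Hence the image of $\varphi$ is $\{1\}$ or $E$; it cannot be $\{1\}$, since that would put $G_j$ inside $U_{j-1}$ and thus $G_j\subseteq G\cap U_{j-1}=\{1\}$, contradicting the choice of $j$. So $\varphi$ is a monomorphism onto $E$, hence an isomorphism of finite gr-group schemes (a faithfully flat monomorphism is an isomorphism; alternatively compare $\kf$-dimensions of coordinate rings using faithful flatness of the Hopf-algebra quotient), so $G_j\cong E$ is elementary. Since $G_j$ is normal in $G$, it is the desired closed normal elementary gr-subgroup scheme.

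The substantive work here is all upstream: it sits in Lemma \ref{lm:Ui} (the inductive construction of the elementary conormal quotients of $(\grUT_I)_{(r)}$ via the cotensor-product dimension count) and in the embedding of an arbitrary finite gr-group variety into some $(\grUT_I)_{(r)}$. Granting those, the only points that need care in this argument are that intersections of closed gr-subgroup schemes are closed gr-subgroup schemes, that normality descends from $W$ to $G$, and the two-subgroup dichotomy for elementary schemes together with the fact that a monomorphism onto the whole target is an isomorphism for finite gr-group schemes. I would also flag that this construction gives no control over the degree of the elementary quotient — it may well be concentrated in degree $0$ — which is precisely the gap left open by Question \ref{qt:conormalkappa}.
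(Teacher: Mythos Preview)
Your argument is correct and follows essentially the same route as the paper: embed $G$ in some $(\grUT_I)_{(r)}$, intersect $G$ with the normal chain $U_i$ from Lemma~\ref{lm:Ui}, and take the first nontrivial intersection, which is forced to be elementary because the only closed gr-subgroup schemes of an elementary $E$ are $\{1\}$ and $E$. The only differences are cosmetic (you phrase the key step as a monomorphism $G_j\hookrightarrow E$ with trivial kernel rather than as an injection of successive quotients, and you spell out the two-subgroup dichotomy for $E$ that the paper leaves implicit), and your closing remark about the lack of degree control exactly matches the paper's own caveat.
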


\begin{proof} Let $G\hookrightarrow(\grUT_I)_{(r)}$ and let $(\grUT_I)_{(r)}=U_{s+1}>U_s>\ldots>U_1>U_0=\{1\}$ be a chain of subgroups as in Lemma \ref{lm:Ui}. Let us consider the new chain
\begin{equation}\label{eq:chain}
G=G\cap U_{s+1}\geq G\cap U_s\geq\ldots\geq G\cap U_1\geq G\cap U_0=\{1\}.
\end{equation}
Notice that each $G\cap U_i$ is a normal closed graded subgroup scheme of $G$. Moreover, there is a natural map $G\cap U_i\rightarrow U_i\rightarrow U_i/U_{i-1}$. The kernel of this map is precisely $G\cap U_{i-1}$, and we obtain an injective morphism of graded group schemes
$$
\frac{G\cap U_i}{G\cap U_{i-1}}\hookrightarrow\frac{U_i}{U_{i-1}}.
$$
Since $E=U_i/U_{i-1}$ is elementary, $(G\cap U_i)/(G\cap U_{i-1})\cong E$ or $(G\cap U_i)/(G\cap U_{i-1})\cong\{1\}$. Since $G\cap U_{s+1}=G$ and $G\cap U_0=\{1\}$, these quotients cannot be all $\{1\}$. Let $i_0$ be the smallest index such that $(G\cap U_i)/(G\cap U_{i-1})\cong E$. Then, for each $i<i_0$, $G\cap U_i=\{1\}$ and
$$
E\cong \frac{G\cap U_{i_0}}{G\cap U_{i_0-1}}\cong G\cap U_{i_0};
$$
therefore $G\cap U_{i_0}$ is an elementary normal closed graded subgroup scheme of $G$.
\end{proof}

\begin{rk} The above \eqref{eq:chain} is a version of \cite[Lemma A.15]{HS} in the non-connected case.
\end{rk}

\section{Cohomology of finite graded group varieties}

We can now prove our main theorem. We will prove it by induction, using Proposition \ref{prop:conormalmonogenicquotients}.

\begin{lm}\label{invact}
Let $G$ be a finite gr-group variety that acts on a gr-commutative ring $R$. There exists $N$ such that, for all $x\in R$, the element $x^{p^N}$ is invariant under $G$. 
\end{lm}

\begin{proof}
Having  $G$ acting on $R$ corresponds to a coaction of $A = \kf[G]$ on $R$.  Since $G$ is a gr-group variety, we write $(A_0, \mathfrak{m})$ for the local ring $A_0$. For simplicity let $x$ be a homogeneous element in $R$, then the coaction is of the form
 $$\Delta_R(x) = (1+m) \otimes x + \sum m_1 \otimes x_1 + \sum a_1 \otimes b_2,$$
 where $m \in \mathfrak{m}$, $|x_1| = |x|$ and $|a_1| + |b_2|$ with $|b_2| < |x|$. Since $A$ and $A_0$ are finite dimensional and $A_0$ is local, it follows that $\Delta_R(x^{p^N}) = 1 \otimes x^{p^N}$ for $N >> 0$, namely, as soon as $\mathfrak{m}^{p^N}=0$. 
\end{proof}

%We also recall the following result.
%
%\begin{lm}[{\cite[Lemma A.6]{HS}}] Let $\{E_r,d_r\}$ be a spectral sequence of finitely generated modules over a Noetherian ring $T$. There is an integer $N$ with the property that all the differentials $d_r$ are zero if $r>N$.
%\end{lm}

\begin{thm}\label{thm:cohom1} Let $A$ be the coordinate ring of a finite gr-group variety over a field $\kf$. We have the following:
\begin{itemize}
\item[(H)] the cohomology ring $H^{*,*}(A,\kf)$ is a finitely generated $\kf$-algebra;
\item[(Q)] if $A\rightarrow B$ is a quotient of graded commutative Hopf algebras, $H^{*,*}(B,\kf)$ is a finite $H^{*,*}(A,\kf)$-module;
\item[(M)] if $R$ is a Noetherian ring on which $A$ acts trivially and $M$ is a finite $R$-module, $H^{*,*}(A,M)$ is a finite $H^{*,*}(A,R)$-module.
\end{itemize}
\end{thm}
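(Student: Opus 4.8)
The plan is to induct on $\dim_\kf A$, using Proposition~\ref{prop:conormalmonogenicquotients} to produce an elementary normal closed graded subgroup scheme $E$ of $G$, giving a short exact sequence of graded Hopf algebras $\kf\rightarrow B\rightarrow A\rightarrow \kf[E]\rightarrow\kf$, where $B=\kf[G/E]$ has strictly smaller dimension (since $\kf[E]$ is a nontrivial elementary Hopf algebra). The base case is $A$ elementary, which is the last Lemma quoted in the excerpt: the cohomology of an elementary $E$ is, up to nilpotents, a polynomial ring in one variable. The three statements (H), (Q), (M) should be proven simultaneously, since the inductive step for each uses the others on the smaller algebra; I would set up a single combined inductive hypothesis. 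The key tool throughout is the \LHS{} spectral sequence of Theorem~\ref{thm:LHS} attached to the chosen short exact sequence.

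For (H): from $\kf\rightarrow B\rightarrow A\rightarrow \kf[E]\rightarrow\kf$ one gets $E_2^{p,q,v}=H^{p,v}(B,H^{q,*}(E,\kf))\Rightarrow H^{p+q,v}(A,\kf)$. The inductive hypothesis (H) for $B$ gives that $H^{*,*}(B,\kf)$ is a finitely generated $\kf$-algebra; the base-case computation gives that $H^{*,*}(E,\kf)$ is finitely generated, in fact generated by one class $y$ (plus a nilpotent $\lambda$). The standard argument (as in \cite{Wi}, \cite{FS}) is to show a suitable power of the polynomial generator $y\in H^{*,*}(E,\kf)$ survives to a permanent cycle — this is where one needs that the coefficients $H^{q,*}(E,\kf)$ form a finitely generated module over something controllable. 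Here the extra subtlety over the connected case is that $E$ may sit in degree $0$ (the construction does not guarantee positive degree), so one cannot argue purely by internal-degree reasons that differentials vanish; instead one invokes a Venkov/Evens-style norm or transfer argument, or the finiteness of $B$-module structure on the $E_2$-page together with Noetherianity, to see that $E_\infty$ is a finitely generated module over a finitely generated subring, hence (after the usual associated-graded-to-filtered passage, using that we are over a field and everything is bigraded with finite-dimensional pieces in each bidegree) $H^{*,*}(A,\kf)$ is finitely generated. The statement (M) applied to $B$ acting trivially on $R=H^{*,*}(E,\kf)$ (with $M$ its finitely generated sub/quotient modules) is exactly what feeds the spectral-sequence coefficients into the finiteness machine.

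For (Q): given $A\rightarrow B'$ a quotient of graded commutative Hopf algebras, one wants $H^{*,*}(B',\kf)$ finite over $H^{*,*}(A,\kf)$. I would intersect the subgroup $H'$ (with $\kf[H']=B'$) with the chain coming from $E$, exactly as in the proof of Proposition~\ref{prop:conormalmonogenicquotients}, to compare the \LHS{} spectral sequences for $A$ and for $B'$ relative to (the image of) $E$; the map of spectral sequences reduces the claim to (Q) for the smaller algebra $B=\kf[G/E]$ and its quotient, together with (Q)/(M) for the elementary piece where it is classical. For (M): with $A$ acting trivially on Noetherian $R$ and $M$ a finite $R$-module, use the same extension to get $E_2^{p,q,v}=H^{p,v}(B,H^{q,*}(E,M))\Rightarrow H^{p+q,v}(A,M)$; by the base case $H^{*,*}(E,M)$ is a finite $H^{*,*}(E,R)$-module, and $H^{*,*}(E,R)$ is a finite (polynomial-in-one-variable, up to nilpotents) extension of $R$, so the coefficients are Noetherian over an appropriate ring on which $B$ acts trivially, and the inductive (M) for $B$ finishes it. Lemma~\ref{invact} is the technical input ensuring that the relevant invariant subrings are large enough (a $p^N$-th power of everything is invariant), so that the rings $H^{*,*}(A,R)$, $H^{*,*}(B,\ldots)$ one is taking modules over are themselves Noetherian — this is needed because, unlike the connected case, $A_0\neq\kf$ and the action on coefficients is genuinely nontrivial.

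The main obstacle I expect is precisely the point where $E$ lands in degree $0$: in Wilkerson's connected setting the elementary cocentral quotient is in \emph{positive} internal degree, which forces the spectral-sequence differentials out of the polynomial generator to vanish for degree reasons and makes the permanent-cycle argument almost automatic. Here the conormal (not cocentral) elementary quotient may be concentrated in degree $0$, so I must replace that degree argument with a genuine norm/transfer (Evens norm, or the Friedlander–Suslin style argument since the degree-$0$ case is literally their infinitesimal-group-scheme situation) to produce a permanent cycle, and then carefully track that conormality — rather than cocentrality — still gives a well-behaved enough action of $H^{*,*}(E,\kf)$ as a module (it need not be central in the cohomology ring, so one works with it as a module and uses finiteness rather than centrality). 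Managing the bookkeeping of the triple simultaneous induction (H)+(Q)+(M) through this more delicate spectral-sequence comparison, as flagged in the introduction, is the real work.
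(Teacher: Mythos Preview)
Your proposal is correct and follows essentially the same route as the paper: induction on $\dim_\kf A$ via a conormal elementary quotient from Proposition~\ref{prop:conormalmonogenicquotients}, the \LHS{} spectral sequence, and a simultaneous induction on (H), (Q), (M), with Lemma~\ref{invact} supplying the $p^N$-th-power invariant subring. One small sharpening: where you write that $(M)$ is applied with $B$ acting trivially on $R=H^{*,*}(E,\kf)$, note that this action is genuinely nontrivial (conormal, not cocentral), so the paper first passes to the invariant subring $S=H^{*,*}(E,\kf)^{p^N}$ via Lemma~\ref{invact} and then applies $(M_{n/e})$ with $R=S$; the permanent-cycle issue you flag is handled uniformly (degree~$0$ included) by citing \cite[Proposition~1.4.11]{Pal} rather than an Evens norm.
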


\begin{proof} The proof will proceed by induction on $\dim_{\kf}A$. For each of the above properties $(P)$, we will denote by $(P_n)$ the correspond property for all $A$ with $\dim_{\kf}A=n$.

 If $A$ is elementary, then the results are known -- see \cite{L2} or \cite{Wi}.

Let $\dim_{\kf}A=n$, $A$ be non-elementary, and let $E$ be a conormal elementary quotient of $A$ which exists by Proposition \ref{prop:conormalmonogenicquotients}; let $J_E$ be the kernel of $A\rightarrow E$, so that we have the short exact sequence of graded Hopf algebras:
\begin{equation}\label{eq:E}
\kf\rightarrow J_E\rightarrow A\rightarrow E\rightarrow\kf,
\end{equation}
where $\dim_{\kf} E=e$, with $e=2$ or $e=p$, and $\dim_{\kf}J_E=\dim_{\kf}A/\dim_{\kf}E=n/e$. The strategy of the proof will be the following: $(H_{n/e})+(M_{n/e})\Rightarrow(H_n)$; $(H_{n/e})+(M_{n/e})\Rightarrow(M_n)$; and $(H_{n/e})+(M_{n/e})+(Q_{n/e})\Rightarrow(Q_n)$. 

\vspace{2ex}

{$(H_{n/e})+(M_{n/e})\Rightarrow(H_n)$}

Let
$$
E_2^{*,*}=H^{*,*}(J_E,H^{*,*}(E,\kf))\Rightarrow H^{*,*}(A,\kf)
$$
be the LHS spectral sequence induced by the short exact sequence in \eqref{eq:E}, which exists because of Theorem \ref{thm:LHS}. By Lemma \ref{invact} there is $N\gg0$ such that, for all $x\in H^{*,*}(E,\kf)$, $x^{p^N}$ is $J_E$ invariant. Moreover, by \cite[Proposition 1.4.11]{Pal}, a high enough power of the polynomial generator of $H^{*,*}(E,\kf)$ is a permanent cycle. Let
\begin{equation}\label{eq:S}
S=H^{*,*}(E,\kf)^{p^N},
\end{equation}
for $N\gg0$. Notice that $S$ is Noetherian (since it is isomorphic to a polynomial ring in one variable), $J_E$-invariant by construction and of permanent cycles. Since $H^{*,*}(E,\kf)$ is finite over $S$, by $(M_{n/e})$ we have that $H^{*,*}(J_E,H^{*,*}(E,\kf))$ is finite over 
\begin{equation}\label{eq:T}
H^{*,*}(J_E,S)\cong H^{*,*}(J_E,\kf)\otimes S.
\end{equation}
Since $H^{*,*}(J_E,\kf)\subseteq E_2^{*,0}$ in the LHS spectral sequence, each page is naturally a module over it. Thus, $T=H^{*,*}(J_E,\kf)\otimes S$ is made of permanent cycles, it is Noetherian by $(H_{n/e})$ and the $E_2$ page of the LHS spectral sequence is a finite module over $T$. By \cite[Lemma 3.1]{Wi}, $H^{*,*}(A,\kf)=E^{*,*}_\infty$ is Noetherian.

\vspace{2ex}

{$(H_{n/e})+(M_{n/e})\Rightarrow(M_{n/e})$}

Let
$$
F_2^{*,*}=H^{*,*}(J_E,H^{*,*}(E,M))\Rightarrow H^{*,*}(A,M)
$$
and let
\begin{eqnarray*}
&&{E'}_2^{*,*}=H^{*,*}(J_E,H^{*,*}(E,R))=H^{*,*}(J_E,H^{*,*}(E,\kf))\otimes R\Rightarrow\\
&&\Rightarrow H^{*,*}(A,R)\cong H^{*,*}(A,\kf)\otimes R.
\end{eqnarray*}
Let $S$ and $T$ be the rings constructed in \eqref{eq:S} and \eqref{eq:T}, and let $S'=S\otimes R$ and $T'=T\otimes R$; by what was proven in the previous implication, $E'_2$ is a finite $T'$-module. On the other hand $H^{*,*}(E,M)$ is finite over $H^{*,*}(E,R)=H^{*,*}(E,\kf)\otimes R$. Since $H^{*,*}(E,\kf)$ is finite over $S$, $H^{*,*}(E,M)$ is finite over $S'$; since $S$ is $J_E$-invariant, $F_2$ is finite over $S'$, which is a subring of invariants of $E'_2$ (like in the proof of the previous implication). Moveover, $T'$ is made of permanent cycles, and thus each page $F_r$ is finite over $E'_r$, which is finite over a Noetherian ring $T'$ of permanent cycles. Since $T'$ is Noetherian, so is ${T'}^{ev}$ (its even part), and $E'_r$ is still finite over ${T'}^{ev}$. By \cite[Lemma 1.6]{FS}, $F^{*,*}_\infty=H^{*,*}(A,M)$ is finite over ${E'}^{*,*}_\infty=H^{*,*}(A,R)$.

\vspace{2ex}

{$(H_{n/e})+(M_{n/e})+(Q_{n/e})\Rightarrow(Q_n)$}

We have the following diagram

$$
\xymatrix{\kf \ar[r] & J_E \ar[r] \ar[d] & A \ar[r] \ar[d] & E \ar[r] \ar[d] & \kf \\
\kf \ar[r] & J_{E'} \ar[r] \ar[d] & B \ar[r] \ar[d] & E'=B\otimes_AE \ar[r] \ar[d] & \kf\\
 & \kf & \kf & \kf. &}
$$
In term of graded group schemes, if $G_A$ is the gr-group scheme represented by $A$ (and so forth) it corresponds to
$$
\xymatrix{\{1\} \ar[r] & G_E \ar[r] & G_A \ar[r] & G_A/G_E \ar[r] & \{1\}\\
\{1\} \ar[r] & G_E\cap G_B \ar[r] \ar@{^(->}[u] & G_B \ar[r] \ar@{^(->}[u] & G_B/G_E\cap G_B \ar[r] \ar@{^(->}[u] & \{1\}.  }
$$

Since $B\otimes_AE$ is a quotient of $E$, that is elementary, there are only two possibilities. Either $E'\cong\kf$ or $E'\cong E$ (corresponding to the two possibilities $G_B\cap G_E=\{1\}$ or $G_B\cap G_E=E$). If $E'\cong\kf$, then $J_{E'}\cong B$, which means that $B$ is a quotient of a Hopf algebra of lower dimension. By $(Q_{n/e})$, the map $H^{*,*}(J_E,\kf)\rightarrow H^{*,*}(B,\kf)$ is finite. Since this map factors as $H^{*,*}(J_E,\kf)\rightarrow H^{*,*}(A,\kf)\rightarrow H^{*,*}(B,\kf)$, the map $H^{*,*}(A,\kf)\rightarrow H^{*,*}(B,\kf)$ is finite.

If $E'\cong E$, let
$$
E_2^{*,*}=H^{*,*}(J_E,H^{*,*}(E,\kf))\Rightarrow H^{*,*}(A,\kf)
$$
and let 
$$
{F'}_2^{*,*}=H^{*,*}(J_{E'},H^{*,*}(E',\kf))\Rightarrow H^{*,*}(B,\kf).
$$
Let $S$ and $T$ be the rings constructed in \eqref{eq:S} and \eqref{eq:T}. Using $(H_{n/e})$ and $(M_{n/e})$, as in the proof of the previous implication, we have that $T^{ev}$ is a Noetherian ring of permanent cycles and that $E_2^{*,*}$ is finite over it. Similarly, $H^{*,*}(J_{E'},H^{*,*}(E',\kf))$ is finite over $H^{*,*}(J_{E'},S)\cong H^{*,*}(J_{E'},\kf)\otimes S$. Notice that we are identifying $H^{*,*}(E,\kf)$ and $H^{*,*}(E',\kf)$, since $E\cong E'$. On the other hand, by $(Q_{n/e})$, $H^{*,*}(J_{E'},\kf)$ is finite over $H^{*,*}(J_E,\kf)$. Thus, ${F'}_2^{*,*}$ is finite over the subring of permanent cycles $T^{ev}$ of $E_2^{*,*}$, which means that ${F'}_r^{*,*}$ is always finite over $E_r^{*,*}$. By \cite[Lemma 1.6]{FS}, ${F'}^{*,*}_\infty=H^{*,*}(B,\kf)$ is finite over $E^{*,*}_\infty=H^{*,*}(A,\kf)$.
\end{proof}

\bibliographystyle{amsalpha}

\end{document}